\newtheorem{thm}{Theorem}[section]
\newtheorem{lem}[thm]{Lemma}
\newtheorem{cor}[thm]{Corollary}
\newcommand{\eps}{\varepsilon}
\newcommand{\mc}[1]{\mathcal{#1}} 
\newcommand{\bb}[1]{\mathbb{#1}}
\newcommand{\brm}[1]{\operatorname{#1}}
\newcommand{\bs}[1]{\boldsymbol{#1}}
\newcommand{\stww}{\operatorname{stww}}
\newcommand{\tww}{\operatorname{tww}}
\newcommand{\mad}{\operatorname{mad}}
\newcommand{\fs}[2]{\left(\frac{#1}{#2}\right)}
\newcommand{\s}[1]{\left(#1\right)}
\renewcommand{\v}{\textup{\textsf{v}}}
\newcommand{\e}{\textup{\textsf{e}}}
\newcommand{\N}{\mathbb{N}}
\begin{document}

\title{Twin-width of sparse random graphs}

\thanks{The first author is supported by the Institute for Basic Science (IBS-R029-C1). The second and fourth authors are supported by the Natural Sciences and Engineering Research Council of Canada (NSERC). Les deuxi\`eme et quatri\`eme auteurs sont supportés par le Conseil de recherches en sciences naturelles et en génie du Canada (CRSNG). The third author is funded by the SNSF Ambizione Grant No.~216071.}

\subjclass[2020]{}
\keywords{}

\author{Kevin Hendrey}
\address{Discrete Mathematics Group, Institute for Basic Science (IBS), Daejeon, South Korea}
\email{kevinhendrey@ibs.re.kr}
\urladdr{sites.google.com/view/kevinhendrey/}

\author{Sergey Norin}
\address{Department of Mathematics and Statistics, McGill University, Montr\'eal, Canada}
\email{snorin@math.mcgill.ca}
\urladdr{www.math.mcgill.ca/snorin/}

\author{Raphael Steiner}
\address{Institute of Theoretical Computer Science, Department of Computer Science, ETH Z\"{u}rich, Switzerland}
\email{raphaelmario.steiner@inf.ethz.ch}
\urladdr{sites.google.com/view/raphael-mario-steiner/}

\author{J\'er\'emie Turcotte}
\address{Department of Mathematics and Statistics, McGill University, Montr\'eal, Canada}
\email{mail@jeremieturcotte.com}
\urladdr{www.jeremieturcotte.com}

	\begin{abstract}
		We show that the twin-width of every $n$-vertex $d$-regular graph is at most $n^{\frac{d-2}{2d-2}+o(1)}$ and that almost all $d$-regular graphs attain this bound. More generally, we  obtain  bounds on the twin-width of sparse Erd\H{o}s-Renyi  and regular random graphs, complementing the bounds in the denser regime due to Ahn, Chakraborti, Hendrey, Kim and Oum.
  \end{abstract}
	\maketitle

\section{Introduction}

The twin-width $\tww(G)$ of a graph $G$ is a parameter recently introduced by Bonnet, Kim, Thomass\'{e} and Watrigant~\cite{bonnet_twin-width_2022}.\footnote{Informally, \emph{twin-width} of a graph $G$ is the minimum integer $w$  such that there exists a sequence of $|V(G)| - 1$ iterated vertex identifications, starting with $G$ such that at every step of the process, every vertex is incident with at most $w$ red edges, where a red edge appears between two sets of identified vertices if they are neither fully adjacent nor fully non-adjacent. We will not need the formal definition in this paper, and will be working with a simpler notion of \emph{sparse twin-width} which we introduce at the end of this section.} Twin-width has received a lot of attention recently largely due to its algorithmic applications, as it is shown in~\cite{bonnet_twin-width_2022} that any first-order formula can be evaluated in linear time on graphs in a given class of bounded twin-width, given a witness certifying that twin-width is bounded. Structured graph classes such as proper minor closed graph classes and classes of bounded clique-width or rank-width have bounded twin-width. Meanwhile, a counting argument can be used to show that there exist $3$-regular graphs of unbounded twin-width~\cite{bonnet_twin-width_2022-1}.

Bounds on the twin-width of general graphs were first considered by  Ahn, Hendrey, Kim and Oum~\cite{ahn_bounds_2022}, who have shown that $\tww(G) \leq \s{\frac{1}{2}+o(1)}n$ and that \begin{equation}\label{e:mUpper} \tww(G) \leq \s{1 + o(1)} \sqrt{3m} \end{equation} for every graph $G$ with $n$ vertices and $m$ edges. They have also started investigating the twin-width of random graphs $G(n,p)$. This investigation was continued by Ahn et al.~\cite{ahn_twin-width_2022} where it was shown that with high probability the twin-width of $G(n,p)$ is $\Theta(n \sqrt{p})$ for $726\log n/n \leq p \leq 1/2$, i.e. \eqref{e:mUpper} is tight up to a constant factor in this regime. Meanwhile, already in~\cite{ahn_bounds_2022}  it was shown that if $p < c/n$ for fixed $c<1$ then the twin-width of $G(n,p)$ is at most two. These results left open the question of determining the typical twin-width of $G(n,p)$ for the range $1/n < p =  o(\log n/n )$. As one of the contributions of this paper, we almost fully resolve this question (see Theorem~\ref{t:main}).

We also consider the twin-width of random $d$-regular $n$-vertex graphs for $d =  o (\log n)$. Prior to our work, very little was known even in the case $d=3$. As mentioned above, $3$-regular graphs have unbounded twin-width, but the counting argument only gives a lower bound which is triply logarithmic in the number of vertices. Meanwhile, the best prior upper bound on the twin-width of $n$-vertex $3$-regular graphs was of the order $O(\sqrt{n})$ as above. In this paper, we precisely determine the maximum twin-width among all $n$-vertex $3$-regular graphs as $n^{\frac{1}{4}+o(1)}$, and show that $\tww(G) =  n^{1/4 + o(1)}$ for almost all $n$-vertex $3$-regular graphs $G$. These results generalize to $d$-regular graphs, determining precisely the asymptotics for the maximum twin-width of $n$-vertex $d$-regular graphs for any $d$.

\begin{thm}\label{t:regular}
	If $d \geq 2$ is an integer, then $$  \tww(G) \leq   n^{\frac{d-2}{2d-2}+ o(1)} $$ for every $n$-vertex $d$-regular graph $G$. Moreover, the inequality holds with equality for almost all $n$-vertex $d$-regular graphs.
\end{thm}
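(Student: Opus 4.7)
The theorem has two parts---an upper bound for every $d$-regular graph, and a matching lower bound for almost all such graphs---which I would establish separately. The case $d=2$ is essentially trivial (cycles have bounded twin-width), so I focus on $d\ge 3$. For the upper bound, I plan to work with the sparse twin-width $\stww$ parameter foreshadowed in the introduction and first prove (or invoke) a relation of the form $\tww(G)\le \stww(G)^{1+o(1)}$ for sparse $G$. To bound $\stww(G)$ for a $d$-regular $G$, I would construct an explicit contraction sequence in two phases: first, partition $V(G)$ into parts $V_1,\ldots,V_t$ of size $s$ (to be optimized) and contract each part to a single super-vertex; second, contract the resulting sparse quotient trigraph on $n/s$ vertices. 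The within-part contractions contribute red degree at most roughly $ds$, while the second phase's contribution is controlled by \eqref{e:mUpper} or, more sharply, by an inductive application of the theorem itself to the quotient. Balancing these two contributions and optimizing $s$ should yield $s=n^{d/(2d-2)+o(1)}$ and final twin-width $n^{(d-2)/(2d-2)+o(1)}$.

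For the lower bound on a uniformly random $d$-regular graph $G$, I plan to exploit two high-probability structural properties: girth $\Omega(\log n)$, and strong expansion $|\partial S|\ge (d-o(1))|S|$ for $|S|\le n/2$. Given an arbitrary contraction sequence for $G$, I would consider the first stage at which some super-vertex $U$ reaches the critical size $k^\ast=n^{(d-2)/(2d-2)}$. By the girth assumption, $G[U]$ is nearly a forest, so $|\partial U|\ge (d-2-o(1))|U|$. The adversary might try to ``absorb'' this boundary into few large super-vertices in order to keep the red degree of $U$ small, but doing so forces those absorbing super-vertices to have been grown up to large size in earlier stages, themselves incurring comparable red degrees during their construction (by a similar analysis applied to sub-stages). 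A case analysis trading off these two possibilities should yield a stage with red degree at least $n^{(d-2)/(2d-2)-o(1)}$.

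The main obstacle is the upper bound---specifically, exhibiting a good partition for every $d$-regular graph, not just well-behaved random ones. Partition-based arguments rely on finding subsets with controlled edge boundary, and worst-case $d$-regular graphs (with many short cycles or other irregularities) may require iterated refinement, BFS-ball decompositions, or discharging-style arguments to handle. Additionally, the passage from $\stww$ back to $\tww$ for sparse graphs is itself a nontrivial technical preparation, and getting the exact exponent $(d-2)/(2d-2)$ out of the balance rather than a weaker exponent like $1/3$ (which a naive single-level recursion would produce) will likely require carrying the regularity/expansion hypothesis through the induction on the quotient.
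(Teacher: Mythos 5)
There are genuine gaps in both halves of your plan. For the upper bound, the missing idea is \emph{how} the partition is produced: for the contraction of a part to be cheap, the vertices in a part must have (nearly) identical neighbourhoods relative to the partition, and a generic partition into parts of size $s$ does not have this property. Your own accounting shows the problem: the within-part red degree is $\approx ds$, so with $s=n^{d/(2d-2)}$ you already exceed the target $n^{(d-2)/(2d-2)}$ (for $d=3$ you would get $n^{3/4}$ instead of $n^{1/4}$); and if instead you take $s\approx n^{(d-2)/(2d-2)}$, the quotient on $n/s$ vertices can still carry $\Theta(dn)$ crossing edges --- on an expander \emph{every} partition into polynomially many parts has almost all edges crossing --- so \eqref{e:mUpper} applied to the quotient returns $\Theta(\sqrt{dn})$ and the recursion never improves on the known bound. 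BFS balls and discharging do not repair this for worst-case regular graphs. The paper's construction supplies exactly the missing ingredient: decompose $G$ into $a$ spanning subgraphs each orientable with outdegree at most one (\cref{l:2factors}), label each vertex by independently hashed forward paths of length $r$ in each subgraph (\cref{l:Hom2Gamma}), and contract the fibres of the resulting homomorphism into an explicit product graph $\Pi$ whose sparse twin-width is bounded by induction (\cref{l:GammaTww}); Hajnal--Szemer\'edi plus Chernoff guarantee the fibres are balanced. Two vertices in the same fibre then have neighbours confined to a bounded number of common fibres, which is what keeps the contraction cheap. (The $\tww$ versus $\stww$ issue you flag is not a real obstacle: $\tww(G)\le\stww(G)\le\tww(G)+\Delta(G)$.)

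For the lower bound, two problems. First, a uniformly random $d$-regular graph does \emph{not} have girth $\Omega(\log n)$ with high probability; the number of $k$-cycles is asymptotically Poisson with constant mean for each fixed $k$, so whp the girth is below any function tending to infinity, and the ``$G[U]$ is nearly a forest'' step is unavailable for parts of size $n^{(d-2)/(2d-2)}$ without further argument. Second, and more fundamentally, the adversary/expansion argument you sketch is precisely the kind of direct argument that is not known to yield superconstant lower bounds for bounded-degree graphs: the step where ``absorbing super-vertices must themselves have incurred comparable red degree earlier'' fails because the contraction sequence can grow many parts in parallel and spread the boundary charge, and nothing in girth or vertex expansion pins down the exponent $\frac{d-2}{2d-2}$. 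The paper avoids this entirely by counting: \cref{lem:Kpartition} shows $\stww(G)\le w$ forces $G$ into a balanced blowup of a $w$-degenerate graph on $K$ parts, \cref{l:Lower} bounds the number of such graphs, and comparison with the McKay--Wormald count of $d$-regular graphs (\cref{t:Dnumber}) gives the lower bound for almost all of them --- the exponent drops out of this entropy comparison, not out of a structural property of any individual graph.
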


More generally, for various sparse random graph models we show that the twin-width is typically polynomial in the number of vertices, with the exponent governed by the maximum average degree of the graph. 

Recall that 
the \emph{maximum average degree} of a graph $G$, denoted by $\mad(G)$, is defined as the maximum average degree of subgraphs of $G$, i.e. $$\mad(G) = \max_{H \subseteq G, \v(H)>0}\frac{2\e(H)}{\v(H)}.$$  
(We denote the number of vertices and edges of a graph $H$ by $\v(H)$ and $\e(H)$, respectively.)

Let us  first state a special case of our most general result for 
Erd\H{o}s-Renyi random graphs with constant average degree.  

\begin{thm}\label{t:ER}
	For every $c > 1$ there exists $\rho = \rho(c) > 2c$ such that with high probability $$ \mad\s{G\s{n,\frac{c}{n}}}=(1+o(1))\rho \qquad \mathrm{and} \qquad \tww\s{G\s{n,\frac{c}{n}}} =   n^{\frac{\rho-2}{2\rho-2}+ o(1)}. $$
\end{thm}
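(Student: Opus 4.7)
The plan is to handle the two assertions of Theorem~\ref{t:ER} in turn: first the concentration of $\mad(G(n,c/n))$ around a limit $\rho(c)$, then the matching upper and lower bounds on $\tww(G(n,c/n))$. For the first, I would define $\rho=\rho(c)$ variationally. The expected number of vertex-subsets $S\subseteq V(G(n,c/n))$ with $|S|=\lceil\alpha n\rceil$ inducing at least $d|S|/2$ edges equals
\[
\binom{n}{\lceil\alpha n\rceil}\,\Pr\!\left[\brm{Bin}\!\left(\tbinom{\lceil\alpha n\rceil}{2},\tfrac{c}{n}\right)\ge\tfrac{d\lceil\alpha n\rceil}{2}\right],
\]
which via Stirling and Chernoff is $\exp(nF(\alpha,d,c)+o(n))$ for an explicit continuous function $F$. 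I let $\rho(c)$ be the largest $d$ with $\sup_\alpha F(\alpha,d,c)\ge 0$; then $\rho(c)>2c$ follows by evaluating $F$ near $\alpha=1$, $d=c$ and perturbing. A first-moment argument gives $\mad\le(1+o(1))\rho$ whp, and the matching lower bound uses either a second-moment calculation on subsets $S$ at a near-maximizing $\alpha$, or Azuma-type concentration for $\mad$ (which changes by at most $1$ under a single edge swap).

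The upper bound on $\tww$ follows by applying the paper's main general bound, which surely gives $\tww(G)\le n^{(\mad(G)-2)/(2\mad(G)-2)+o(1)}$ for every graph $G$, generalizing Theorem~\ref{t:regular}; then monotonicity of $x\mapsto(x-2)/(2x-2)$ for $x\ge 2$ together with $\mad\le(1+o(1))\rho$ yields the claimed exponent.

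For the lower bound on $\tww$, the mere existence whp of a subgraph $H$ of average degree near $\rho$ on $\alpha_0 n$ vertices is not enough, since $H$ could conceivably have low twin-width. Instead, I would reveal the graph in two stages: first expose the vertex set $S$ of a near-maximum-density subgraph, then show that the induced edges on $S$ are asymptotically distributed like independent Bernoullis of matching density $q$ (with $q|S|\approx\rho$). A counting-based lower bound on the twin-width of a typical graph of this density (which must already appear in the paper as the lower-bound ingredient of Theorem~\ref{t:regular}) then forbids small $\tww(H)$, and hence small $\tww(G(n,c/n))$, since $|S|=\alpha_0 n$ is linear in $n$.

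The main obstacle is this distributional control over a near-maximum density subgraph: conditioning on being a density maximizer skews the joint law of edges within $S$, and showing that the conditional law remains sufficiently close to a product measure requires a delicate switching or contiguity argument in the spirit of results for random regular graphs. Once that is in place, the lower bound combines cleanly with the upper bound to give the claimed $n^{(\rho-2)/(2\rho-2)+o(1)}$.
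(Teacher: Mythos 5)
Your proposal has two genuine gaps, and the larger one is the concentration of $\mad$. The statement $\mad(G(n,c/n))=(1+o(1))\rho(c)$ is not something the paper proves from scratch: it is precisely Hajek's conjecture~\cite{hajek_performance_1990} on the densest subgraph of a sparse random graph, and the paper explicitly outsources it to Anantharam and Salez~\cite{anantharam_densest_2016}, whose proof goes through local weak convergence and a distributional fixed-point equation, not moments. Your plan to recover it via a first-moment definition of $\rho$ plus ``a second-moment calculation or Azuma-type concentration'' does not close. Azuma is useless here: $\mad$ is an $O(1)$ quantity, while the edge-exposure martingale has $\Theta(n)$ increments each of Lipschitz constant $\Theta(1)$, so the concentration window it yields is $\Theta(\sqrt{n})$ --- far larger than the quantity being controlled. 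The second moment method is exactly the tool that is \emph{not} known to match the first-moment threshold for this optimization problem; that mismatch is why the question was open for over two decades. (Also, your derivation of $\rho(c)>2c$ by perturbing $F$ near $\alpha=1$, $d=c$ only yields $\rho\ge c$.) For this half of the theorem you must either cite \cite{anantharam_densest_2016}, as the paper does, or supply a genuinely new argument.

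For the twin-width bounds: the upper bound is essentially the paper's route (\cref{t:upper}(ii) via \cref{l:mainSparse2}, transferred from the uniform-$m$ model to $G(n,p)$ by conditioning on the number of edges), though note that the deterministic bound is not valid ``for every graph $G$'' --- \cref{l:mainSparse2} requires $\log\Delta(G)\le(\log n)^{1/6}$, which holds only with high probability. For the lower bound you correctly see that mere existence of a dense subgraph is insufficient, but the obstacle you then hit --- controlling the conditional law of the edges inside a density maximizer, via switching or contiguity --- is one the paper never faces and you do not resolve. \cref{t:lower}(ii) is a direct union-bound counting argument in $\mc{G}_{n,m}$: for each candidate dense set $X$ and edge count $m'$, the number of graphs of small sparse twin-width whose densest part is $G[X]$ with $m'$ edges is at most the number of low-twin-width graphs on $X$ with $m'$ edges (bounded by \cref{l:Lower}, which rests on the balanced-quotient decomposition of \cref{lem:Kpartition}) times the number of placements of the remaining $m-m'$ edges, and the sum over all $X,m'$ is $o(|\mc{G}_{n,m}|)$. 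The only probabilistic inputs are \cref{l:balanced} (the densest part has $|X|\ge\alpha(n-1)$ whp) and \cref{lem:xlogx} ($\e(G[X])\le\frac{1}{10}|X|\log|X|$ whp). No distributional control of the conditional edge law is needed. As written, your proposal leaves both the concentration of $\mad$ and the twin-width lower bound unproved.
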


Our most general result gives estimates of the typical twin-width for several models of sparse random graphs with average degree bounded from below away from one. It extends Theorems~\ref{t:regular} and~\ref{t:ER} as well as giving tighter bounds on the error terms.

\begin{thm}\label{t:main} Fix $\eps > 0$. Let $G$ be a random $n$-vertex graph obtained using any of the following models:
	\begin{itemize}
		\item $G$ is a uniformly random $d$-regular graph on $n$ vertices with $2 < d < \frac{1}{5}\log n$,
		\item $G$ is a uniformly random graph on $n$ vertices and $m$ edges with $(1+\eps)n \leq  m \leq \frac{1}{11}n\log n$, or
		\item $G$ is a random graph $G(n,p)$ with $\frac{1+\eps}{n} \leq p \leq  \frac{1}{12}\frac{\log n}{n}$.
	\end{itemize}
Then, with high probability, we have\begin{equation}\label{e:main}
  n^{\frac{d-2}{2d-2} }\cdot \Omega\s{ \frac{d}{\sqrt{\log n}}} \leq \tww(G) \leq n^{\frac{d-2}{2d-2}} \cdot e^{O\s{d^{-1}\sqrt{\log n}\cdot {\log \log n} +\log d}},
\end{equation}
	where $d = \mad(G)$. (The constants hidden in the $\Omega(\cdot)$ and $O(\cdot)$ notations depend only on $\eps$.) 
\end{thm}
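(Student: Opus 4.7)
The plan is to prove the upper and lower bounds in \eqref{e:main} separately, both of which rely on standard random-graph inputs: with high probability, $d = \mad(G)$ concentrates on a specific value determined by the model (via a routine first/second moment computation on densest subgraphs), and the graph is locally tree-like in the sense that balls of radius $r \approx \tfrac{1}{2}\log n/\log d$ around most vertices are trees of size $\Theta(d^{r})$. A preparatory section would establish all of this uniformly for the three models using the configuration model for $d$-regular graphs and standard couplings between $G(n,p)$ and $G(n,m)$.

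For the upper bound, the idea is that the target exponent $\tfrac{d-2}{2d-2} = \tfrac{1}{2} - \tfrac{1}{2(d-1)}$ represents an improvement by a factor of roughly $n^{1/(2(d-1))}$ over the general bound $(1+o(1))\sqrt{3m} = O(\sqrt{dn})$ of \eqref{e:mUpper}. I would realize this saving by a two-phase contraction sequence. In the first phase, partition $V(G)$ into blocks $C_1,\dots,C_k$ of size $s$, chosen small enough that each $C_i$ is contained in a tree-like BFS-ball (so that internal contractions contribute red degree only $O(\log s)$), but large enough that the quotient becomes tractable; using $\mad(G) = d$, each $C_i$ has at most $ds$ boundary edges, bounding the red degree of the corresponding super-vertex by $O(ds)$ throughout the phase. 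In the second phase, apply a refinement of \eqref{e:mUpper} to the quotient multigraph on $n/s$ vertices and at most $dn/2$ edges. Balancing the two contributions with $s \approx n^{1/(d-1)}$ (up to polylogarithmic factors) produces the claimed exponent $\tfrac{d-2}{2d-2}$, and the multiplicative error $e^{O(\log d + d^{-1}\sqrt{\log n}\log\log n)}$ absorbs the lower-order costs of the block construction and the breakdown of tree-likeness at scale $\log n/\log d$.

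For the lower bound, the plan is an expansion plus local-tree counting argument. Fix any witnessing contraction sequence with $\tww(G) = w$, and consider the moment it first reduces $G$ to $k^{\ast} := \lceil dn/w \rceil$ super-vertices. The red-degree budget at that moment gives at most $k^{\ast}w/2 \leq dn/2$ red edges. On the other hand, viewing the super-vertices as a partition $\mc{P}$ of $V(G)$, each \emph{mixed} pair $(P,Q) \in \mc{P}^2$ (neither fully adjacent nor fully non-adjacent) accounts for only $O(\sqrt{\log n})$ of the edges of $G$ crossing the partition, because in the locally tree-like setting two edges of $G$ going between the same two parts $P, Q$ are rare. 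Since random $d$-regular and sparse Erd\H{o}s-Renyi graphs satisfy, with high probability, an expansion inequality of the form $e_G(\mc{P}) = \Omega(dn)$ for every partition of $V(G)$ into $k^{\ast}$ parts in the relevant range, combining these estimates gives $w \geq n^{(d-2)/(2d-2)} \cdot \Omega(d/\sqrt{\log n})$.

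The main obstacle is arranging the Phase I partition for the upper bound so that the blocks simultaneously achieve an internal \emph{tree-like} twin-width contribution and the matching boundary-edge count; because tree-likeness only holds up to radius $r \approx \log n/\log d$, block sizes are capped at $n^{o(1)}$ for small $d$, which forces a delicate balancing responsible for the awkward error factor $e^{O(d^{-1}\sqrt{\log n}\log\log n + \log d)}$. A secondary technical difficulty is the uniform verification of expansion in the full range $2 < d \leq \tfrac{1}{5}\log n$: standard spectral bounds for random regular graphs work for constant $d$ and for $d$ comparable to $\log n$, but the intermediate regime requires a more robust moment-based partition-expansion estimate that I would prove by a union bound over balanced partitions combined with Chernoff-type concentration.
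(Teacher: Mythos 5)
Both halves of your proposal have genuine gaps. For the upper bound, Phase~II is where the argument breaks. After contracting BFS-balls of size $s$, the quotient $H=G/\mc{P}$ has $n/s$ vertices but still $\Theta(dn)$ \emph{distinct} edges: precisely because $G$ is locally tree-like, two blocks are almost never joined by more than one edge of $G$, so no multi-edges collapse, and applying \eqref{e:mUpper} to $H$ just returns $O(\sqrt{dn})$ --- the bound you started from. (Balancing $\max\{ds,\sqrt{dn}\}$ then gives nothing better than $\sqrt{dn}$; for $d=3$ your own choice $s\approx n^{1/(d-1)}$ makes $ds\approx 3\sqrt{n}$.) To reach the exponent $\frac{d-2}{2d-2}$ the quotient must have only about $n^{(d-2)/(d-1)}$ edges, i.e.\ the partition must be engineered so that the $\Theta(dn)$ edges of $G$ concentrate on very few block-pairs; no geometric ball-based partition does this. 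The paper instead builds the partition from a homomorphism: it decomposes $G$ into out-degree-one subgraphs (Lemma~\ref{l:2factors}), labels vertices by tuples of random sequences (Lemma~\ref{l:Hom2Gamma}), and thereby maps $G$ onto an explicit structured host $\Pi(r,\dots,r;b,q)$ whose maximum degree and sparse twin-width are controlled directly (Lemma~\ref{l:GammaTww}); local tree-likeness is not used at all --- only Hajnal--Szemer\'edi for bounded-degree graphs, to make the preimage sizes concentrate.

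The lower bound is arithmetically vacuous as written: with $k^*=\lceil dn/w\rceil$ the red-edge budget $k^*w/2$ equals $dn/2$ \emph{independently of $w$}, so comparing it with $e_G(\mc{P})/O(\sqrt{\log n})=\Omega(dn/\sqrt{\log n})$ yields an inequality that holds for every $w$ and extracts no bound. Even after repairing the bookkeeping, a union bound over all partitions into $k^*$ parts cannot close for constant $d$ (there are $n^{\Theta(n)}$ such partitions against failure probabilities of at best $e^{-O(dn)}$), and more fundamentally an argument using only edge statistics of one witnessing partition would give a \emph{deterministic} lower bound for all graphs with the stated expansion, which is far stronger than what is known --- as the paper notes, counting remains the only known technique here. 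The paper's route is a counting argument over graphs: every $G$ with $\stww(G)\le w$ is encoded by a partition into at most $K$ balanced parts whose quotient is $w$-degenerate (Lemma~\ref{lem:Kpartition}), the number of graphs admitting such an encoding is bounded (Lemma~\ref{l:Lower}), and this count is compared with $|\mc{G}_{n,m}|$ or the McKay--Wormald count of $d$-regular graphs; for the $G(n,m)$ model one must additionally show that the densest subgraph is large and not too dense (Lemmas~\ref{l:balanced} and~\ref{lem:xlogx}) before counting. None of this machinery, nor a substitute for it, appears in your sketch.
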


Our results, in particular, answer a question of 
Sylvester~\cite{sylvester_open_nodate} whether  $\tww(G)=\Theta(\sqrt{\e(G)})$ with high probability for sparse Erd\H{o}s-Renyi  random graphs and random cubic graphs. By \cref{t:main} the answer is negative for random $d$-regular graphs with   $d=o(\frac{\log n}{n \log \log n})$ and for $G(n,p)$ with $p = o(\frac{\log n}{n \log \log n})$ and  the answer is positive for $p = \Omega(\frac{\log n}{n})$. 

It follows from the results of~\cite{ahn_bounds_2022,ahn_twin-width_2022} that $\log\tww(G(n,p)) = o(\log n)$ when $p = \frac{1-o(1)}{n}$ and $\log\tww(G(n,p) = \frac{1}{2} - o(1)$ for $p = \Omega(\frac{\log n}{n})$. \cref{t:main} establishes the asymptotics of $\log\tww(G(n,p)$ and $\frac{1}{2}-\log\tww(G(n,p)$ in most of the intermediate range, i.e. for $ \frac{1+o(1)}{n} \leq p \leq o(\frac{\log n}{n \log \log n})$

We finish the introduction by briefly and informally describing the techniques we use to prove Theorems~\ref{t:regular},~\ref{t:ER} and~\ref{t:main}.  
To obtain the upper bounds on twin-width of a graph $G$, we map the vertices of $G$ to random sequences over some alphabet so that the images of far away vertices are independent and uniformly distributed over the set of all sequences, while the images of neighbours share a fair amount of information.
This yields a homomorphism from the original graph to a structured and appropriately sparse graph $H$, allowing us to bound the twin-width of $G$ in terms of the twin-width of $H$. We then finish by bounding the twin-width of the more structured graph $H$. We prove the upper bounds in \cref{s:upper}.

Our proofs of lower bounds on the twin-width rely on counting, which as far as we know remains the only successful technique for establishing superconstant lower bounds on the twin-width of graphs with constant maximum degree. As a new ingredient to facilitate counting arguments, we show that for every graph $G$ with twin-width at most $w$ and any $K \leq \v(G)$, $G$ is a spanning subgraph of a ``fairly balanced'' blowup of some $(w+\Delta(G))$-degenerate graph on $K$ vertices. We prove the lower bounds in \cref{s:lower}.

\subsection*{Preliminaries, basic definitions and notation} We mostly use standard graph theoretical notation. Let $G$ be a graph and $X\subseteq V(G)$. We denote by $\Delta(G)$ the maximum degree of $G$, by $G[X]$ the subgraph of $G$ induced by $X$, and by $G\setminus X$ the graph $G[V(G)\setminus X]$ (if $X=\{v\}$, we write $G \setminus v$).

Let $[n] = \{1,2,\ldots,n\}$. If $S$ and $T$ are sets, $\bs{v}=(s_1,s_2,\ldots, s_r) \in S^r$ is a sequence of elements of $S$, and $f: S \to T$ is  map, let $f(\bs{v}):=(f(s_1),f(s_2),\ldots,f( s_r)) \in T^r.$ 

It will be convenient for us to work with a somewhat atypical notion of homomorphism, where we allow adjacent vertices to be mapped to the same vertex. Explicitly, for graphs $G$ and $H$ we say that a map $\phi : V(G) \to V(H)$ is a \emph{homomorphism from $G$ to $H$} if for every $uv \in E(G)$ we either have $\phi(u)\phi(v) \in E(H)$  or $\phi(u) = \phi(v)$.

Next let us elaborate on the relationship between  Theorems~\ref{t:regular},~\ref{t:ER} and~\ref{t:main}. The error terms in \eqref{e:main} are of the form $n^{o(1)}$ in the setting covered by the theorem and so  Theorem~\ref{t:main} implies Theorems~\ref{t:regular} and~\ref{t:ER} except \begin{itemize}
	\item the upper bound in Theorem~\ref{t:regular} needs to hold for all $n$-vertex $d$-regular graphs  not just almost all such graphs, and
	\item Theorem~\ref{t:ER} postulates concentration of $\mad(G(n,p))$ for $p=c/n$ and constant $c$, i.e. the existence of $\rho(c)$ such that $\mad(G(n,c/n)) = (1+o(1))\rho(c)$ with high probability. Anantharam and Salez establish existence of such a $\rho$ in~\cite{anantharam_densest_2016} and show that $\rho$ can be found as a fixed point of a certain distributional fixed-point equation, as was conjectured earlier by Hajek~\cite{hajek_performance_1990}. Thus by the results from~\cite{anantharam_densest_2016} Theorem~\ref{t:ER} is implied by Theorem~\ref{t:main}.
\end{itemize}

Finally, it is well-known that the random graph $G(n,p)$ with high probability has $m = p\binom{n}{2} + o(\sqrt{p}n \log n)$ edges, and for fixed $m$ every graph with $V(G)=[n]$ and $\e(G)=m$ has the same probability in $G(n,p)$. Thus if \eqref{e:main} holds for any value of $m = p\binom{n}{2} \pm o(\sqrt{p}n \log n)$ with high probability (independent of the choice of $m$), then it holds with high probability for $G(n,p)$. It follows that the conclusion of Theorem~\ref{t:main} for $G(n,p)$ follows from its conclusion for uniformly random graphs on $n$ vertices and $m$ edges. 

Thus Theorems~\ref{t:regular},~\ref{t:ER} and~\ref{t:main} follow from the next two theorems, which we prove in  Sections~\ref{s:upper} and~\ref{s:lower}, respectively.

\begin{restatable}[Upper bounds]{thm}{upper}\label{t:upper}\item
	\begin{itemize}
		\item[(i)] If $G$ is an $n$-vertex, $d$-regular graph with  $d > 2$, then 
\begin{equation}\label{e:mainUpper}	\tww(G) \leq n^{\frac{d-2}{2d-2}} \cdot e^{O\s{d^{-1}\sqrt{\log n}\cdot {\log \log n} +\log d}}. \end{equation}
		\item[(ii)] Let $\eps > 0$ be fixed. If $G$ is a uniformly random graph on $n$ vertices and $m$ edges with $m \geq (1+\eps)n$ and $d = \mad(G)$, then \eqref{e:mainUpper} holds with high probability.
	\end{itemize}	
\end{restatable}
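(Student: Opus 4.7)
My plan is to establish Part~(i) by constructing, for every $n$-vertex $d$-regular graph $G$, a random graph homomorphism $\phi \colon V(G) \to V(H)$ to a structured sparse target graph $H$, and then bounding $\tww(G)$ in terms of $\tww(H)$ together with the maximum fibre size of $\phi$. Under the paper's relaxed homomorphism convention, $G$ is a spanning subgraph of the blow-up $H[(\phi^{-1}(x))_{x \in V(H)}]$ (with internal fibre edges permitted), and hence a standard argument shows $\tww(G) \leq \tww(H) + O(\max_x |\phi^{-1}(x)|)$, obtained by first identifying each fibre into a single vertex and then performing a near-optimal contraction sequence of $H$. The goal is therefore to pick $H$ and $\phi$ so that both quantities are at most $n^{(d-2)/(2d-2)+o(1)}$.

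I would take $H$ to be a De~Bruijn-type shift graph on $[q]^r$, in which $(a_1,\ldots,a_r)$ and $(b_1,\ldots,b_r)$ are adjacent whenever they agree on $r-1$ consecutive coordinates, and tune the parameters so that $q^r$ is slightly larger than $n^{d/(2d-2)}$ (making the average fibre have size $\approx n^{(d-2)/(2d-2)}$). The map $\phi$ would be defined using a uniformly random colouring $\sigma \colon V(G) \to [q]$ together with a coherent \emph{canonical walk} $v = v_0, v_1, \ldots, v_{r-1}$ at every vertex (for instance via a random functional digraph refining the neighbourhood structure of $G$), with $\phi(v) := (\sigma(v_0), \ldots, \sigma(v_{r-1}))$. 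For any edge $uv$ of $G$, the walks at $u$ and $v$ are linked by a single shift, so that $\phi(u)$ and $\phi(v)$ are equal or adjacent in $H$. Chernoff-type concentration (with a Lov\'asz Local Lemma tweak if needed) should ensure that with positive probability no fibre exceeds $n/q^r$ by more than a polylogarithmic factor.

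The main obstacle is bounding $\tww(H)$: since $|V(H)| = q^r$ is only slightly smaller than $n$, the generic $\sqrt{e(H)}$ bound from \eqref{e:mUpper} is far too weak. One must exploit the iterated product structure of the shift graph --- $H$ being essentially an $r$-fold blow-up of a small $q$-vertex graph --- to design a contraction sequence that fixes one coordinate at a time, accumulating red degree proportional to $q$ per layer rather than to $\sqrt{|V(H)|}$. A careful such analysis should yield $\tww(H) \leq q \cdot r^{O(1)}$. The choice $r \approx \sqrt{\log n}$ with $q$ tuned accordingly then balances the fibre-size and $\tww(H)$ contributions and produces exactly the $e^{O(d^{-1}\sqrt{\log n}\cdot \log \log n + \log d)}$ factor appearing in \eqref{e:mainUpper}.

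For Part~(ii), I would combine Part~(i) with standard concentration behaviour of $G(n,m)$. For $m \leq n\log n/11$, the random graph has maximum degree $O(\log n/\log\log n)$ with high probability, and $\mad(G)$ is tightly concentrated on a value depending on $m/n$. Since the Part~(i) construction uses regularity only through a bound on the number of edges leaving each fibre --- a quantity governed by $\mad(G)$ --- the same random-homomorphism argument, applied now along a degeneracy ordering obtained from $\mad(G) \leq d$ rather than assuming pure regularity, should yield the required inequality. The remaining concern is the handful of vertices of above-average degree, but there are only $n^{o(1)}$ such vertices, so they can be pre-contracted or absorbed into the error term without affecting the final bound.
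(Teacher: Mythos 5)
Your overall architecture matches the paper's: a random homomorphism $\phi$ from $G$ to a structured sparse target $H$, a bound $\tww(G)\le\max\{\tww(H),\,O(\Delta(G)\cdot\max_x|\phi^{-1}(x)|)\}$, a coordinate-peeling contraction sequence for $H$, and Chernoff-type control of fibre sizes (the paper handles the local dependencies via Hajnal--Szemer\'edi rather than the Local Lemma, but that is a detail). However, there is a genuine gap at the heart of the construction: a single ``canonical walk'' per vertex, produced by one functional digraph of outdegree one, does \emph{not} make $\phi$ a homomorphism on all of $E(G)$. Such a digraph contains at most $n$ of the $dn/2$ edges of a $d$-regular graph; for an edge $uv$ with neither $f(u)=v$ nor $f(v)=u$, the walks at $u$ and $v$ are unrelated, so $\phi(u)$ and $\phi(v)$ need not be adjacent in your shift graph, and $G$ is not a spanning subgraph of the blow-up of $H$. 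Your construction only yields a homomorphism from a spanning subgraph of average degree $2$, which is exactly the degenerate case where the claimed exponent $\frac{d-2}{2d-2}$ vanishes.

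The missing ingredient is the decomposition step: one must cover $G$ by $a$ spanning subgraphs, each admitting an outdegree-one orientation, so that every edge lies in at least $b$ of them (possible whenever $a/b\ge\frac12\mad(G)$, via Hall's theorem), assign an independent walk-labelling to each, and take as target the product graph on $([q]^r)^a$ in which adjacency means that at least $b$ of the $a$ components are close. It is this $b$-out-of-$a$ agreement, combined with the length-$r$ walks sharing $r-1$ symbols, that produces image-space size $q^{ar}\approx n^{a/(2a-b)}\to n^{d/(2d-2)}$ and hence fibre size $n^{(d-2)/(2d-2)}$; the coordinate-peeling bound must then be proved for this product rather than for a single shift graph. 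Without the decomposition, your parameter count $q^r\approx n^{d/(2d-2)}$ has no construction behind it. For Part~(ii) the same gap propagates; additionally, note that the theorem places no upper bound on $m$, so the dense regime $d=\Omega(\log n/\log\log n)$ must be dispatched separately (e.g.\ via the $O(\sqrt{m})$ bound \eqref{e:mUpper}), and the sparse regime needs only $\Delta(G)=O(d\log n)$ with high probability, not concentration of $\mad(G)$ or any pre-contraction of high-degree vertices.
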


\begin{restatable}[Lower bounds]{thm}{tlower}\label{t:lower} 
	If $G$ is either
	\begin{itemize}
		\item[(i)] a uniformly random $n$-vertex, $d$-regular graph with $2 < d < \frac{1}{5}\log n$, or 
		\item[(ii)] a uniformly random graph on $n$ vertices and $m$ edges where $(1+\eps)n \leq  m \leq \frac{1}{11}n\log n$ and $d = \mad(G)$,
	\end{itemize}	
		then with high probability		
			\begin{equation}\label{e:lower}	\tww(G) \geq
		n^{\frac{d-2}{2d-2} }\cdot \Omega\s{ \frac{d}{\sqrt{\log n}}}.  		
		\end{equation}	
\end{restatable}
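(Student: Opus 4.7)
The strategy is a counting argument based on the structural lemma stated in the introduction: if $\tww(G) \leq w$ and $K \leq \v(G)$, then $G$ is a spanning subgraph of a ``fairly balanced'' blowup of some $(w+\Delta(G))$-degenerate graph on $K$ vertices. Proving this lemma---by carefully choosing a stopping point in a witnessing contraction sequence and then balancing the resulting partition---will be the first ingredient of \cref{s:lower}.

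Given this lemma, for case (i) I upper bound the number of $n$-vertex $d$-regular graphs with $\tww \leq w$ by enumerating (a) a partition of $[n]$ into $K$ nearly-equal parts, giving at most $K^n$ choices; (b) a $(w+d)$-degenerate graph $H$ on $K$ vertices, giving at most $K^{O((w+d)K)}$ choices via the standard degeneracy-order bound; and (c) a $d$-regular spanning subgraph of the blowup of $H$, giving at most $\binom{(w+d)n^2/K}{dn/2}$ choices, since the blowup has at most $(w+d)K\cdot(n/K)^2$ edges. I then compare the resulting product to the asymptotic count $\exp\bigl((dn/2)\log(n/d)+\Theta(nd)\bigr)$ of labeled $d$-regular graphs (Bollob\'as' formula) and optimize $K$. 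Writing $K=(w+d)L$ and balancing (a) and (b) against the savings in (c), the optimum is reached for $L$ of order $\log n$, at which point the threshold below which the count is $o(\text{total})$ becomes $w\leq n^{(d-2)/(2d-2)}\cdot\Omega(d/\sqrt{\log n})$. Below this threshold, a uniformly random $d$-regular graph whp satisfies $\tww(G)>w$.

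For case (ii) I run the same counting argument with ``$d$-regular spanning subgraph'' replaced by ``$m$-edge spanning subgraph'' (contributing $\binom{(w+d)n^2/K}{m}$) and the total count replaced by $\binom{\binom{n}{2}}{m}$. Since the statement is phrased in terms of $d=\mad(G)$, I also need concentration of $\mad(G(n,m))$: for $m=\Theta(n)$ this is the Anantharam--Salez theorem cited in the introduction, while for $n\ll m\leq\tfrac{1}{11}n\log n$ standard first-moment arguments on densest subgraphs give concentration near a slight perturbation of $2m/n$. Monotonicity of $x\mapsto(x-2)/(2x-2)$ then converts any almost-sure lower bound on $\mad(G)$ into the exponent appearing in \eqref{e:lower}.

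The main obstacle will be the delicate optimization needed to extract the $\Omega(d/\sqrt{\log n})$ factor rather than only the correct polynomial exponent $(d-2)/(2d-2)$: this forces one to track lower-order logarithmic contributions in (a), (b), (c) simultaneously and to choose the balancedness parameter for the blowup with some care, especially when $d$ is allowed to grow with $n$ up to $\tfrac{1}{5}\log n$. A secondary difficulty is proving the structural lemma with a genuinely balanced blowup rather than an arbitrary one, since naively stopping a contraction sequence at $K$ super-vertices produces wildly unbalanced part sizes; this we expect to remedy either by modifying the contraction to preserve balance phase by phase or by post-processing the partition using the degree bound on $G$.
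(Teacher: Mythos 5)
Your overall strategy---a balanced-partition lemma producing a degenerate quotient, followed by a first-moment count against the total number of graphs, with the parameter $K$ optimized---is exactly the paper's, and your anticipated fix for the balancedness issue (working across the whole contraction sequence rather than stopping it at one point) is essentially what the paper does. However, your parameter optimization in case (i) is wrong, and as stated the count fails. With $K=(w+d)L$ parts and $L=\Theta(\log n)$, the partition-enumeration term alone is $K^n=e^{(1+o(1))n\log w}=e^{(1+o(1))\frac{d-2}{2d-2}\,n\log n}$, whereas the saving from restricting the $dn/2$ edges to the blowup is only about $e^{-\frac{dn}{2}(\log\log n+O(1))}$, since the blowup then has roughly $(w+d)K\cdot(n/K)^2\approx n^2/\log n$ admissible pairs versus $\binom{n}{2}$; the first term swamps the second for every $d$ in range, so the number of bad graphs is not shown to be $o(1)$ of the total. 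The correct regime is the opposite one: the paper takes $K=\lfloor d^2n/(w\log n)\rfloor$, i.e.\ parts of size $\Theta(w\log n/d^2)$, for which $K^n\approx n^{\frac{d}{2d-2}n}$ and the per-edge saving is $\approx n^{-\frac{d-2}{d-1}}$; these two contributions essentially exhaust the budget $\s{n/d}^{dn/2}$, leaving exactly the constant factor $\eps^{\Theta(1)}$ per edge that drives the probability to zero. I suspect you have conflated the number of parts with the part size; the term you call (b) is negligible at the true optimum, so ``balancing (a) and (b) against (c)'' is not the right balance.

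Second, case (ii) cannot be handled by ``concentration of $\mad$ plus monotonicity.'' Running the count on all of $G$ with $m$ edges yields a lower bound with exponent $\frac{d_0-2}{2d_0-2}$ where $d_0=2m/n$; since $d=\mad(G)>d_0$ (strictly, by an additive constant when $m=\Theta(n)$, cf.\ $\rho(c)>2c$ in \cref{t:ER}), the target exponent $\frac{d-2}{2d-2}$ is strictly larger, so monotonicity of $x\mapsto\frac{x-2}{2x-2}$ points the wrong way: a polynomially weaker lower bound does not imply the stated one. The paper instead applies the counting lemma to the densest induced subgraph $G[X]$ itself (whose average degree is $d$), which requires two additional high-probability facts---that $|X|=\Omega_\eps(n)$, so that $|X|^{\frac{d-2}{2d-2}}$ is comparable to $n^{\frac{d-2}{2d-2}}$ (\cref{l:balanced}), and that $\e(G[X])=O(|X|\log|X|)$ so the counting lemma's hypotheses hold (\cref{lem:xlogx})---together with a union bound over the at most $2^n$ choices of $X$ and the choices of $\e(G[X])$. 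Some version of this localization to the densest subgraph is unavoidable and is missing from your plan.
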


The time has come to define twin-width, or rather a substitute of it that we will work with. A \emph{contraction sequence from a graph $G$ to a graph $H$} is a sequence of graphs $(G=G_1,G_2,\ldots, G_k=H)$ such that for each $i \in [k-1]$ the graph $G_{i+1}$ is obtained from $G_i$ by identifying two vertices. That is, for some pair of distinct vertices $u,v \in G_i$ the graph $G_{i+1}$ is obtained from $G_i \setminus \{u , v\}$ by adding a new vertex $w$ such that  $wx \in E(G_{i+1})$ if and only if $ux \in E(G_i)$ or $vx \in E(G_i)$. We say that a contraction sequence $(G_1,G_2,\ldots, G_k)$ is a \emph{$w$-contraction sequence} for some positive integer $w$ if $\Delta(G_i) \leq w$ for every $i \in [k]$, and this sequence is a \emph{contraction sequence for the graph $G_1$} if $\v(G_k)=1$.
Finally, we define the \emph{sparse twin-width of a graph $G$}, denotes by $\stww(G)$, as the minimum positive integer $w$ such that there exists a $w$-contraction sequence for $G$. That is, $\stww(G)$ is the  minimum $w$ such that we can reduce $G$ to a one-vertex graph via a sequence of pairwise vertex identifications such that the degrees of all the vertices throughout this process are at most $w$.

The twin-width $\tww(G)$ is defined similarly to the sparse twin-width, but using a notion of trigraphs. For every graph $G$, we have
$$ \max\{\tww(G),\Delta(G)\} \leq \stww(G) \leq \tww(G) + \Delta(G).$$
The maximum degree $\Delta(G)$ is much smaller than the estimates given on $\tww(G)$  in  Theorems~\ref{t:upper} and~\ref{t:lower} and those estimates remain true even if $\tww(G)$ is rescaled by a constant factor. Thus it suffices to prove these theorems with $\tww(G)$ replaced by $\stww(G)$, and that is what we will do, working from now on almost exclusively with the notion of sparse twin-width.

\section{Upper bounds}\label{s:upper}

In this section we prove \cref{t:upper}, establishing the upper bounds in our main results. 

We start by expanding on the description of our methods in the introduction and giving a   sketch of a proof of an upper bound of $n^{\frac{1}{3}+ o(1)}$ on the sparse twin-width of $n$-vertex $3$-regular graphs. While this bound is weaker than the tight bound  $n^{\frac{1}{4}+ o(1)}$, which we will end up proving, it already substantially improves on the previously known bounds and  illustrates the main steps of the approach implemented in this section.

Let $G$ be an $n$-vertex $3$-regular graph. Let $\ell = \s{\frac{1}{6}+o(1)}\log_2 n$ be an integer. For each $v \in V(G)$, let the label $f(v) \in \{0,1\}^{\ell}$ be chosen uniformly at random among all $2^{\ell}$ $\{0,1\}$-sequences  of length $\ell$. If $u_1,u_2,u_3$ are the neighbours of $v$, we define $$\phi(v) = \begin{pmatrix}
f(v) \\ f(u_1) \\ f(u_2) \\ f(u_3)
\end{pmatrix} \in \{0,1\}^{4 \times \ell }.$$
In particular, $\phi(v)$ is uniformly distributed among the $2^{4\ell}= n^{\frac{2}{3}+o(1)}$ matrices of size $4 \times \ell$ with $\{0,1\}$ values. Furthermore, if $uv \in E(G)$ then
$\phi(u)$ and $\phi(v)$ have (at least) two rows in common, although not in the same positions. Thus if $H=H(\ell)$ is the graph with all possible such  $4 \times \ell$ matrices as vertices and an edge between any two matrices sharing at least two rows in this way, then $\phi$ is a homomorphism from $G$ to $H$, and we can obtain a contraction sequence from $G$ to $H$ by arbitrarily identifying pairs of vertices of $G$ with the same image, until we obtain a subgraph of $H$. As  the dependencies between values of $\phi$ are only local, it is not hard to believe (and will be shown later in a more general and technical setting) that the number of preimages of every vertex of $H$ is typically close to the expected value $n^{\frac{1}{3}+o(1)}$ and so we can choose $\phi$ so that  the total degree of vertices identified into a single one in this process is  $n^{\frac{1}{3}+o(1)}$, i.e. our contraction sequence from $G$ to $H$ is an $n^{\frac{1}{3}+o(1)}$-contraction sequence. To show that $\stww(G) \leq n^{\frac{1}{3}+o(1)}$ it remains to extend this sequence to an $n^{\frac{1}{3}+o(1)}$-contraction sequence for $G$, i.e. to obtain an $n^{\frac{1}{3}+o(1)}$-contraction sequence for~$H$. 

To accomplish this final step note that $\Delta(H(\ell)) = n^{\frac{1}{3}+o(1)}$ and that there exists a natural homomorphism from $H(\ell)$ to $H(\ell-1)$ by deleting the final column of each matrix. As in the previous paragraph, we can convert the homomorphism  in to an  $n^{\frac{1}{3}+o(1)}$-contraction sequence from $H(\ell)$ to $H(\ell-1)$, and repeating this process eventually extend this sequence to an $n^{\frac{1}{3}+o(1)}$-contraction sequence for $H$.

This finishes the sketch. 

\vskip 10pt

We proceed to tighten and generalize the steps presented above. The next three lemmas lay down the groundwork which will allow us to obtain a random map $\phi$ from the vertices of a sparse graph $G$ to a set of matrices in a way similar to the above so that the neighbours share as much information as possible, but far away vertices share none. If a graph corresponds to a digraph with maximum outdegree one, it is possible to obtain such $\phi$ with neighbours sharing almost all the information (see \cref{l:Hom2Gamma}). In general, we obtain $\phi$ by decomposing $G$ into such digraphs.

\begin{lem}\label{l:2factors}
	If $G$ is a graph and $a,b\in \N$ are such that $a\geq b$ and $ \frac{a}{b} \geq \frac{1}{2}  \brm{mad}(G)$, then there exist spanning subgraphs $G_1,\ldots,G_a$ of $G$, each of which admits an orientation with maximum outdegree one, such that each edge of $G$ belongs to at least $b$ of them.
\end{lem}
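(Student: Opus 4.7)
The plan is to apply the matroid union theorem to the pseudoforest matroid of $G$. Recall that the edge subsets $I \subseteq E(G)$ for which $(V(G), I)$ admits an orientation of maximum out-degree at most $1$ are precisely the pseudoforests of $G$ (graphs in which each connected component contains at most one cycle), and these form the independent sets of a matroid $M$ on $E(G)$. Its rank function is $r(F) = \sum_C \min(n_C, m_C)$, where the sum runs over the connected components $C$ of the subgraph $(V_F, F)$ with $V_F := \bigcup_{e \in F} e$, and $n_C, m_C$ denote the numbers of vertices and edges of $C$.

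By the matroid union theorem, the integer points of the $a$-fold dilate $a \cdot P(M)$ of the matroid polytope $P(M) = \{y \in \R_{\geq 0}^{E(G)} : y(F) \leq r(F) \text{ for all } F \subseteq E(G)\}$ are exactly the sums $\sum_{i=1}^a \mathbf{1}_{I_i}$ with each $I_i$ independent in $M$. Thus, to produce spanning subgraphs $G_i := (V(G), I_i)$ satisfying the conclusion of the lemma, it suffices to check that $b \cdot \mathbf{1}_{E(G)} \in a \cdot P(M)$; this amounts to $b \leq a$ (given) and $b \cdot |F| \leq a \cdot r(F)$ for every $F \subseteq E(G)$.

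To verify the rank inequality, fix $F$ and decompose $(V_F, F)$ into connected components $C_j$ with $n_j$ vertices and $m_j$ edges. Since $m_j \leq |E(G[V(C_j)])| \leq \tfrac{\mad(G)}{2} n_j \leq \tfrac{a}{b} n_j$, a short case analysis on whether $m_j \leq n_j$ or not gives $m_j / \min(n_j, m_j) \leq \max(1, a/b) = a/b$, where the final equality uses the hypothesis $a \geq b$. Summing over $j$ via the mediant inequality yields $|F|/r(F) \leq a/b$, as required.

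I anticipate the main obstacle to be precisely this rank verification, since the inequality must hold for every edge subset $F$ and not merely for the edge sets $E(G[S])$ of induced subgraphs; the per-component analysis, combined with both hypotheses $\mad(G)/2 \leq a/b$ and $a \geq b$, is what makes it go through. An alternative route would be to apply Hakimi's orientation theorem to the multigraph $G^{(b)}$ obtained by duplicating each edge $b$ times and then eliminate multi-edges within the resulting pseudoforests via a local exchange argument; the matroid union approach above avoids this extra rearrangement by yielding simple spanning subgraphs directly.
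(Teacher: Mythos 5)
Your proof is correct, but it takes a genuinely different route from the paper. The paper's argument is elementary: it builds an auxiliary bipartite graph on $V\times[a]$ and $E\times[b]$, verifies Hall's condition using $\mad(G)\le \frac{2a}{b}$, and reads off the subgraphs $G_i$ from a matching covering $E\times[b]$. You instead recognize the orientable-with-outdegree-one subgraphs as the independent sets of the bicircular (pseudoforest) matroid and invoke the integer decomposition property of the matroid independent-set polytope (a standard consequence of the matroid union theorem); the membership test $b|F|\le a\cdot r(F)$ then reduces, component by component, to exactly the hypotheses $\mad(G)/2\le a/b$ and $a\ge b$, and your case split $m_j\le n_j$ versus $m_j>n_j$ handles arbitrary edge sets $F$, not just induced ones, as you correctly anticipated. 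What each approach buys: the paper's proof is self-contained modulo Hall's theorem, whereas yours imports heavier machinery; on the other hand, your decomposition delivers each edge in \emph{exactly} $b$ of the $I_i$ by construction, and it cleanly avoids a subtlety inherent in matching-based bookkeeping, namely that the $b$ matched partners of the copies of an edge $uv$ live in $\{u,v\}\times[a]$ and could a priori collide in their $[a]$-coordinate via the two different endpoints, so that the edge lands in fewer than $b$ distinct indices; the matroid-union formulation makes the multiplicity count automatic. Both proofs isolate the same combinatorial core (a density condition $|F|\le \frac{a}{b}|V_F|$ for all edge sets $F$), so the choice between them is largely a matter of taste and of which toolbox one wishes to presuppose.
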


\begin{proof}
Let $G=(V,E)$, and consider the following auxiliary bipartite graph $H$. The bipartition of $H$ consists of the two sets $V \times [a]$ and $E\times [b]$, and there exists an edge between vertices $(v,i)\in V\times [a]$ and $(e,j) \in E\times [b]$ if $v$ and $e$ are incident in $G$. We now claim that $H$ admits a matching that covers $E\times [b]$, and to prove this we use Hall's theorem. So consider any subset $X\subseteq E\times [b]$. Let $F \subseteq E$ be the projection of $X$ to the first coordinate, and let $A\subseteq V$ be the set of vertices in $G$ that touch at least one edge in $F$. Since $\mad(G)\le \frac{2a}{b}$, we have $|F|\le \frac{a}{b}\cdot |A|$. Furthermore, it follows directly by definition of $H$ that $N_H(X)=A\times [a]$. Thus, we have $|N_H(X)|=|A|\cdot a\ge b \cdot |F|\ge |X|$, where we used that $X\subseteq F \times [b]$ in the last inequality. Thus, Hall's condition is satisfied and we indeed find a matching $M$ in $H$ that covers every vertex in $E\times [b]$. For $i=1,\ldots,a$, define $E_i:=\{(u,v)|\exists t\in[b]: (u,i)\text{ is matched to }(uv,t)\text{ in }M\}$. It is readily verified that $D_i:=(V,E_i)$ is a digraph of maximum out-degree at most $1$, and its underlying graph $G_i$ is a subgraph of $G$. The fact that $M$ covers every vertex in $E\times [b]$ now directly implies that every edge $e$ of $G$ belongs to exactly $b$ of the graphs $G_1,\ldots,G_a$, as desired.
\end{proof}

In the next lemma we construct homomorphisms from graphs that admit an orientation with maximum outdegree one (which we will obtain from \cref{l:2factors}) to a certain explicit family of sparse graphs, which will serve as building blocks of the homomorphism we will use to bound $\stww(G)$.

We now define this graph family. Let $S$ be a (possibly infinite) set.
We say that two sequences $(s_1,s_2,\ldots,s_r)$ and  $(s'_1,s'_2,\ldots,s'_r)$ of the same length $r$ are \emph{close} if they contain a common subsequence of length $r-1$, that is if there exist indices $i$ and $j$ such that
$$(s_1,s_2,\ldots,s_{i-1},s_{i+1},\ldots,s_r) = (s'_1,s'_2,\ldots,s'_{j-1},s'_{j+1},\ldots,s'_r).$$

Let $\Gamma=\Gamma(S,r)$ be a graph with $V(\Gamma)=S^{r}$ such that two sequences are adjacent in $\Gamma$ if and only if they are close. Let $\Gamma_*(S,r)$ be the induced subgraph of $\Gamma$ with vertex set restricted to the set of sequences  with pairwise distinct elements. 

\begin{lem}\label{l:Hom2Gamma}
	If $G$ is a graph such that $G$ admits an orientation with maximum outdegree at most one, then for every positive integer $r$ there exists a set $S$ and a homomorphism $\phi$ from $G$ to $\Gamma_*(S,r)$ such that for every pair of vertices $u,v\in V(G)$, if the sequences $\phi(u)$ and $\phi(v)$ share a common element, then the distance between $u$ and $v$ in $G$ is at most $3r-3$.  
\end{lem}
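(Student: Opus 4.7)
The plan is to fix an orientation of $G$ with maximum out-degree at most one, let $f(v)$ denote the out-neighbour of $v$ when it exists, and observe that each weakly connected component of the oriented graph is either a tree whose edges point toward a unique sink, or a unicyclic component consisting of a directed cycle with trees oriented toward it. My target is to make $\phi(v)$ extend the forward walk $(v,f(v),f^{2}(v),\ldots,f^{r-1}(v))$, because then for every directed edge $u\to v=f(u)$, dropping position $0$ of $\phi(u)$ and the last position of $\phi(v)$ yields the same length-$(r-1)$ sequence, witnessing closeness in $\Gamma_*(S,r)$. Two obstructions prevent a literal use of the plain walk: the walk can hit a sink within $r-1$ steps, and it can enter a directed cycle of length less than $r$ and begin to repeat.

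I would remove the first obstruction by introducing, for each tree component with sink $\rho$, fresh dummy elements $y_1^{\rho},y_2^{\rho},\ldots$ specific to that component and setting $\phi(v)=(v,f(v),\ldots,f^{d-1}(v),\rho,y_1^{\rho},\ldots,y_{r-d-1}^{\rho})$ for $v$ at distance $d\le r-1$ from $\rho$, and $\phi(v)=(v,f(v),\ldots,f^{r-1}(v))$ otherwise (with the initial vertex-segment understood as empty when $d=0$). The second obstruction I would handle one cycle at a time: for each directed cycle $C=v_0\to v_1\to\cdots\to v_{k-1}\to v_0$ with $k<r$ I introduce fresh dummies $x_1^C,\ldots,x_{r-k}^C$ and set $\phi(v_i)=(v_i,v_{(i+1)\bmod k},\ldots,v_{(i+k-1)\bmod k},x_1^C,\ldots,x_{r-k}^C)$, whereas for cycles of length $k\ge r$ no dummies are needed and I use $\phi(v_i)=(v_i,v_{(i+1)\bmod k},\ldots,v_{(i+r-1)\bmod k})$. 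For a tree vertex $u$ in a unicyclic component at distance $t$ from the cycle with $f^{t}(u)=v_i$, I concatenate $(u,f(u),\ldots,f^{t-1}(u))$ with an initial segment of the $v_i$-encoding above, padding with the $x_j^C$'s if space remains. I then take $S$ to be the union of $V(G)$ with all dummy elements introduced.

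With $\phi$ specified, three properties need to be checked. Distinctness of entries in each $\phi(v)$ is immediate: vertices on any forward path in a tree are distinct, any window of at most $k$ consecutive cycle vertices is injective, the dummies are fresh, and labels of different types or different components are disjoint by construction. The homomorphism property then follows in each configuration (tree edge, tree-to-cycle edge, cycle edge) by confirming that removing position $0$ of $\phi(u)$ and the last position of $\phi(v)$ produces identical length-$(r-1)$ sequences; I expect this step to be the main obstacle, in particular the cycle wrap-around edge $v_{k-1}\to v_0$ when $k<r$, whose closeness relies on the symmetric inclusion of all $k$ cycle vertices starting from $v_i$ in every $\phi(v_i)$. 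Finally, the distance bound is obtained by casing on the type of the shared element $e\in \phi(u)\cap\phi(v)$: if $e\in V(G)$, then $u$ and $v$ both reach $e$ via forward walks of length at most $r-1$, giving $d_G(u,v)\le 2r-2$; if $e$ is the sink $\rho$ or a tree dummy $y_j^{\rho}$, then $u$ and $v$ lie in the same tree component at distance at most $r-1$ from $\rho$, again giving $d_G(u,v)\le 2r-2$; and if $e$ is a cycle dummy $x_j^C$, then $u$ and $v$ are at distance at most $r-k-1$ from the cycle $C$, whose underlying graph has diameter at most $\lfloor k/2\rfloor$, so that $d_G(u,v)\le 2(r-k-1)+\lfloor k/2\rfloor\le 3r-3$. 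The desired bound $3r-3$ therefore holds in every case.
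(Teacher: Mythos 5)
Your construction is correct and follows essentially the same route as the paper: encode each vertex by its forward walk under the out-function, pad with fresh dummies near the terminal sink or short cycle, so that a shared real vertex forces a common forward image (distance $\le 2r-2$) and a shared dummy forces proximity to the terminal set. The only genuine difference is at short cycles, where the paper maps the entire cycle onto the single all-dummy sequence — exploiting its relaxed homomorphism notion allowing adjacent vertices to share an image — whereas you keep cycle vertices distinct via cyclic windows; the closeness check you flag as the main obstacle does go through, by deleting $v_i$ from both $\phi(v_i)$ (position $0$) and $\phi(v_{i+1})$ (position $k-1$) rather than the first and last positions.
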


\begin{proof} 
It suffices to prove the lemma for connected graphs $G$. Indeed, if $G$ is  a union of two vertex disjoint graphs non-null graphs $G_1$ and $G_2$, we may inductively assume that for $i=1,2$ there exist homomorhisms $\phi_i : V(G_i) \to \Gamma_*(S_i,r)$ satisfying the conditions of the lemma for $G_i$. Further, we may assume without loss of generality that $S_1 \cap S_2 = \emptyset$. If $\phi: V(G) \to   \Gamma_*(S_1 \cup S_2,r)$ is defined so that $\phi|_{V(G_i)}=\phi_i$ for $i=1,2$, then $\phi$ is as desired.

Hence we assume that $G$ is connected and that the edges of $G$ are directed so that every vertex has at most one outneighbour. Then $G$ contains a unique directed cycle $C$ or exactly one vertex with no outneighbours.

 Let $X=V(C)$ in the first case and $X=\{v\}$ in the second.

We distinguish two cases depending on $|X|$. Assume first that $|X|\geq r$.  Then for every vertex $v \in V(G)$ there exists a unique directed path of length $r$ that starts at $v$. Let $v=v_1, v_2,\ldots,v_r$ be the vertices of this path in order and set
$$ \phi(v) := (v_1,\ldots,v_r).$$
It is easy to see that $\phi$ is a homomorphism from $G$ to $\Gamma_*(V(G),r)$ and if $\phi(u)$ and $\phi(v)$ share an element then there exists a path between $u$ and $v$ in $G$ with at most $2r-2$ edges.
Thus $\phi$ satisfies the lemma in this case.

It remains to consider the case $|X|<r$. In this case, let $S = V(G) \cup \{s_1,\ldots,s_r\}$, where $\{s_1,\ldots,s_r\}$ is an arbitrary set disjoint from $V(G)$. For each $v \in V(G)$, let $v=v_1,v_2,\ldots,v_k$ be the vertices of the longest directed path starting at $v$ disjoint from $X$ (if $v\in X$, then in particular $k=0$) and of length at most $r$. Let $$ \phi(v) := (v_1,\ldots,v_k, s_1,\ldots, s_{r-k}).$$

Again it is easy to see that $\phi$ is a homomorphism from $G$ to $\Gamma_*(S,r)$. If $\phi(u)$ and $\phi(v)$ share an element in $V(G)$  then the distance between $u$ and $v$ is at most $2r-2$ as in the previous case. If $\phi(u)$ and $\phi(v)$ share an element in $\{s_1,\ldots,s_r\}$  then the distance from both $u$ and $v$ to $X$ is at most $r-1$ and as $X$ has diameter less than $r$, the distance between $u$ and $v$ is at most $3r-3$, as desired.
\end{proof}	

Next we formalize the arguments used to bound twinwidth via homomorphisms sketched at the beginning of the section, starting with the following 
simple lemma.

\begin{lem}\label{l:HomUpper}
	If $G,H$ are graphs and $\phi$ is a homomorphism from $G$ to $H$,
	then 
 \begin{equation}
	\stww(G) \leq \max\{\stww(H), m\Delta(G)\},
	\end{equation}
 where $m := \max_{v \in V(H)}|\phi^{-1}(v)|$.
\end{lem}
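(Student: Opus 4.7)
The plan is to construct an appropriate contraction sequence for $G$ witnessing the bound in two phases. In the first phase, I would contract together the vertices within each fiber $\phi^{-1}(v)$ for $v \in V(H)$, one pair at a time and in arbitrary order, until every fiber has been collapsed to a single super-vertex. Since each fiber has size at most $m$, every super-vertex appearing during this phase corresponds to a subset of a fiber containing at most $m$ original vertices, each of degree at most $\Delta(G)$ in $G$; hence the degree of every super-vertex at every intermediate step is at most $m\Delta(G)$. After this phase, we obtain a graph $G'$ whose vertex set is $\phi(V(G)) \subseteq V(H)$, and by the homomorphism property, whenever $uv \in E(G)$ with $\phi(u) \neq \phi(v)$, the edge $\phi(u)\phi(v)$ lies in $E(H)$. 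Therefore $G'$ is naturally a subgraph of $H$.

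In the second phase, I would fix a $\stww(H)$-contraction sequence $(H = H_1, H_2, \ldots, H_k)$ for $H$ and mirror it on $G'$: at each step $i$, if the $H$-sequence identifies two vertices $u_i, v_i \in V(H_i)$, we identify the corresponding super-vertices on the $G'$-side when both are present, relabel when only one is present, and do nothing otherwise. By induction on $i$, the current $G'$-side graph $G'_i$ embeds as a subgraph of $H_i$ via the obvious map on super-vertices, since $E(G') \subseteq E(H)$ and we are performing matching identifications. Consequently, $\Delta(G'_i) \leq \Delta(H_i) \leq \stww(H)$ throughout. Concatenating the two phases yields a contraction sequence for $G$ whose width is at most $\max\{m\Delta(G), \stww(H)\}$, which is precisely the claimed bound.

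The proof is conceptually straightforward, and the only delicate point that I expect to require care is the formal bookkeeping in the second phase, where $V(G')$ may be a proper subset of $V(H)$, so that the mirrored sequence on $G'$ is shorter than the one on $H$ and some of the $H$-identifications correspond to no action on the $G'$-side. Stating this cleanly so that the maximum-degree comparison $G'_i \subseteq H_i$ actually holds at every step is the main thing to get right; once that is in place, the degree analysis and the combination of the two phases are immediate.
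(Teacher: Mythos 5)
Your proposal is correct and follows essentially the same route as the paper: first collapse each fiber $\phi^{-1}(v)$ pair by pair (noting that every intermediate super-vertex combines at most $m$ vertices of degree at most $\Delta(G)$, hence has degree at most $m\Delta(G)$), arriving at a subgraph of $H$, and then continue with a $\stww(H)$-contraction sequence for $H$. The paper states the second phase in one line where you spell out the mirroring bookkeeping, but the argument is the same.
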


\begin{proof}
    It suffices to show that there exists a $m\Delta(G)$-contraction sequence from $G$ to a subgraph of $H$, as we can then extend it to a $\max\{\stww(H), m\Delta(G)\}$-contraction sequence for $G$ using the $\stww(H)$-contraction sequence for $H$. We obtain such a sequence by repeatedly identifying the vertices in $\phi^{-1}(v)$ into a single vertex for every $v$ in $V(H)$. At any step of this process each vertex of the intermediate graph $G_i$ has been obtained by identification of at most $m$ vertices of $G$ and so has degree at most $m\Delta(G)$. 
\end{proof}

Next we define a certain technical product of the graphs $\Gamma(S,r)$ defined above to which we will map our graph $G$ and bound its twin-width. 
For positive integers $q \geq 2$, $r_1,\ldots,r_a,$ and $b,$ let  $\Pi=\Pi(r_1,r_2,\ldots,r_a;b,q)$ be the graph with $$V(\Pi) = \{(v_1,v_2,\ldots,v_a) \: | \:  v_i \in [q]^{r_i}\}$$ where two vertices $\bs{v}=(v_1,v_2,\ldots,v_a)$ and $\bs{u}=(u_1,u_2,\ldots,u_a) $ are adjacent if there exists a set of $b$ components of $\bs{u}$ which are close to the corresponding components of $\bs{v}$, that is if there exists $I \subseteq [a]$ with $|I|=b$ such that for each $i \in I$ either $u_iv_i \in E(\Gamma([q],r_i))$ or $u_i=v_i$.

\begin{lem}\label{l:GammaTww}
If $a,b,q,r,r_1,\ldots,r_a\in \N$ are such that $a \geq b$, $q\geq 2$ and $r_1,\ldots,r_a \leq r$, then
	 \begin{equation}\label{e:GammaTww}
	 \stww(\Pi(r_1,r_2,\ldots,r_a;b,q)) \leq    r^{2b}  a^b q^{ar-b(r-1)+1}.
	 \end{equation}
\end{lem}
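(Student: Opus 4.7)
The plan is to combine a direct upper bound on $\Delta(\Pi)$ with a projection homomorphism that reduces $\Pi$ to a graph of the same form with one fewer entry in some coordinate, and then apply \cref{l:HomUpper} to set up an induction on $R:=r_1+\cdots+r_a$ and on $a$. First, I would bound $\Delta(\Pi)$ by counting: a neighbour of a fixed $\bs{v}$ is obtained by choosing a witness set $I\subseteq[a]$ with $|I|=b$ (at most $\binom{a}{b}\le a^b$ choices), a sequence in $[q]^{r_i}$ close or equal to $v_i$ for each $i\in I$ (at most $r_i^2 q\le r^2 q$ options, coming from deleting one position of $v_i$ and inserting any element of $[q]$), and an arbitrary sequence in $[q]^{r_j}$ for each $j\notin I$. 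Using $r_j\le r$ to bound $\prod_{j\notin I}q^{r_j}\le q^{(a-b)r}$, this gives $\Delta(\Pi)\le a^b r^{2b} q^{(a-b)r+b}$, so that $q\cdot\Delta(\Pi)\le r^{2b}a^b q^{ar-b(r-1)+1}$, exactly the bound in \eqref{e:GammaTww}.

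Next, assuming $r_a\ge 1$, I would define a projection $\phi\colon V(\Pi(r_1,\ldots,r_a;b,q))\to V(\Pi(r_1,\ldots,r_a-1;b,q))$ that deletes the last entry of the $a$-th coordinate of each vertex; each fibre has size exactly $q$. The main technical point, and the step I expect to be the principal obstacle, is showing that $\phi$ is a homomorphism: it suffices to verify that whenever $u_a,v_a\in[q]^{r_a}$ are close or equal in $\Gamma([q],r_a)$, their truncations $u'_a,v'_a\in[q]^{r_a-1}$ to the first $r_a-1$ entries are close or equal in $\Gamma([q],r_a-1)$. A short case analysis on the positions $i,j\in[r_a]$ deleted from $u_a,v_a$ to yield the common length-$(r_a-1)$ subsequence handles this: if $i=j=r_a$ then $u'_a=v'_a$; if exactly one of $i,j$ equals $r_a$ then $u'_a$ and $v'_a$ share a subsequence of length $r_a-2$ (obtained by deleting one further position from each); and if $i,j<r_a$ then necessarily $u_{a,r_a}=v_{a,r_a}$, and the common subsequence of $u_a,v_a$ restricts to one of length $r_a-2$ in $u'_a,v'_a$.

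With these ingredients, \cref{l:HomUpper} gives
$$\stww(\Pi(r_1,\ldots,r_a;b,q))\le\max\bigl\{\stww(\Pi(r_1,\ldots,r_a-1;b,q)),\ q\cdot\Delta(\Pi(r_1,\ldots,r_a;b,q))\bigr\},$$
and the second term is within the target by the degree computation. I would handle the first term by induction on $R+a$: the projection step strictly decreases $R$ by $1$ while keeping $a$ and $b$ fixed; when $r_a$ reaches $0$ the last coordinate is trivial and there is a canonical isomorphism $\Pi(r_1,\ldots,r_{a-1},0;b,q)\cong\Pi(r_1,\ldots,r_{a-1};b-1,q)$ that decreases both $a$ and $b$. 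A routine check confirms that $r^{2(b-1)}(a-1)^{b-1}q^{(a-1)r-(b-1)(r-1)+1}\le r^{2b}a^b q^{ar-b(r-1)+1}$, so the inductive hypothesis always yields a bound within the target. The base cases are a single-vertex graph ($R=a=0$, with $\stww=0$) and $b=0$, in which case $\Pi$ is the complete graph on at most $q^{ar}$ vertices and $\stww\le q^{ar}-1\le q^{ar+1}$, matching \eqref{e:GammaTww}.
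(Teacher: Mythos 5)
Your proof is correct and follows essentially the same route as the paper: the identical degree bound $\Delta(\Pi)\le r^{2b}a^bq^{ar-b(r-1)}$, the same truncation homomorphism fed into \cref{l:HomUpper}, and an induction on $\sum_i r_i$. The only (harmless) difference is in the termination of the induction — the paper stops when all $r_i=1$ and uses $\stww(\Pi)\le\v(\Pi)=q^a$, whereas you continue down to $r_a=0$ and $b=0$ via the isomorphism $\Pi(r_1,\ldots,r_{a-1},0;b,q)\cong\Pi(r_1,\ldots,r_{a-1};b-1,q)$; both base cases check out.
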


\begin{proof}
Let $\Pi=\Pi(r_1,r_2,\ldots,r_a,b)$. First, we bound $\Delta(\Pi)$.
Given  $\bs{u}=(u_1,u_2,\ldots,u_a) \in V(\Pi)$ we can enumerate its neighbours as follows.  There are $\binom{a}{b} < a^b$ ways of choosing a set $I \subseteq [a]$ with $|I|=b$ such that the components with indices in $I$ in the neighbour are close to the corresponding components of $\bs{u}$. For  each $i \in I$ there are at most $q r^2$ ways of choosing the entries of this row in the neighbour, as there are at most $r^2$ ways of choosing the shared subsequences, and at most $q$ ways of choosing the remaining entry. Finally, there are at most $q^r$ ways of choosing entries of each remaining row, yielding an upper bound $a^b \times (qr^2)^b \times (q^r)^{a-b}$ on the number of neighbours of $\bs{u}$. Thus 
\begin{equation}\label{e:DeltaGamma}
\Delta(\Pi) \leq r^{2b}  a^b q^{ar-b(r-1)}
\end{equation}
		
We now prove the lemma by induction on $\sum_{i=1}^a r_i$ using \eqref{e:DeltaGamma} and \cref{l:HomUpper}. The base case, when $r_i=1$ for every $i \in [a]$, is trivial, as $\stww(\Pi)\le \v(\Pi)=q^a<r^{2b}a^bq^{ar-b(r-1)+1}$ in this case.

For the induction step assume without loss of generality that  $r_1 \geq 2$. Let $\psi:  [q]^{r_1} \to [q]^{r_1-1}$ be a map erasing the last component, i.e. $\psi((x_1,\ldots,x_{r_1-1},x_{r_1})) = (x_1,\ldots,x_{r_1-1})$. 
	
Note that $\psi$ maps pairs of close sequences to close ones. Indeed, if deleting the $i$-{th} component  of $(x_1,\ldots,x_{r_1-1},x_{r_1})$ and $j$-th component of $(y_1,\ldots,y_{r_1-1},y_{r_1})$ yields the same sequence, then deleting $i'$-th  component  of $(x_1,\ldots,x_{r_1-1})$ and $j'$-th component of $(y_1,\ldots,y_{r_1-1})$ yields the same sequence, where $i'= \min (i, r-1)$ and  $j'=\min(j, r-1)$. Equivalently, $\psi$ is a homomorphism from $\Gamma([q],r_1)$ to $\Gamma([q],r_1-1).$
	
	Let $\Pi' = \Pi(r_1-1,r_2,\ldots,r_a;b,q)$ and let $\phi: V(\Pi) \to V(\Pi')$ be defined by $ \phi((u_1,u_2,\ldots,u_a))=(\psi(u_1),u_2,\ldots,u_a)$, i.e. $\phi$ erases the last entry of the first component of each vertex of $\Pi$. It is easily seen from the definitions that $\phi$ is a homomorphism from $\Pi$ to $\Pi'$ and $|\phi^{-1}(\bs{v})|=q$ for every $\bs v \in V(\Pi')$.
Thus by  \eqref{e:DeltaGamma}, \cref{l:HomUpper}  applied to $\phi$, and the induction hypothesis applied to $\Pi'$ we have
\begin{align*}	\stww(\Pi) \leq \max \{\stww(\Pi'), q \Delta(\Pi)\} \leq r^{2b}  a^b q^{ar-b(r-1)+1},
\end{align*}
and \eqref{e:GammaTww} holds.
\end{proof}

For the next steps in the proof we need the following rather loose variant of the Chernoff bound. 

If $X$ be a sum of independent random variables taking values in $\{0,1\}$, then 
\begin{equation}\label{e:Chernoff}\brm{P}[X  > c] \leq e^{-c/3}\end{equation}
for every $c \geq 2\brm{E}[X].$ 

We will also use the Hajnal-Szemer{\'e}di theorem on the existence of equitable colourings in graphs with a given maximum degree.

\begin{thm}[{\cite{hajnal_proof_1970}}]\label{t:HS}
	If $G$ is a graph and $k\in \N$ such that  $k\geq \Delta(G)+1$, then $V(G)$ can be partitioned into independent sets  $V_1,\ldots,V_k$ such that $|V_i| \leq \frac{\v(G)}{k}+1$.
\end{thm}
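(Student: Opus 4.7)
The plan is to prove this classical equitable coloring theorem by induction on $n=\v(G)$, following the general strategy of Hajnal and Szemer\'edi. Without loss of generality, assume $k \mid n$ by padding $G$ with isolated vertices (this does not change $\Delta(G)$), and write $n=qk$. The base case $n \leq k$ is trivial, since one can place each vertex in its own class.

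For the inductive step, remove an arbitrary vertex $v$ and apply the inductive hypothesis to $G-v$. Since each class has size at most $(n-1)/k + 1$, hence at most $q$, and the total is $n-1$, the resulting partition consists of exactly one small class $V_j'$ of size $q-1$ and $k-1$ classes of size $q$. Since $v$ has at most $\Delta \leq k-1$ neighbors in $G$, at least one class $V_i'$ is disjoint from $N(v)$. If $i=j$, placing $v$ into $V_j'$ completes the step. Otherwise, put $v$ tentatively into $V_i'$ (now of size $q+1$) and try to restore equitability by moving some vertex $w \in V_i' \cup \{v\}$ to the small class $V_j'$, which is permissible whenever $w$ has no neighbor in $V_j'$.

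The main obstacle is that such a single-step transfer may fail: every vertex of $V_i' \cup \{v\}$ might have a neighbor in $V_j'$. The standard remedy is to set up an auxiliary digraph on the color classes with an arc from $V_s$ to $V_t$ whenever some vertex of $V_s$ has no neighbor in $V_t$, and to search for a directed path from $V_i'$ to $V_j'$; a cascade of single-vertex swaps along this path restores an equitable partition in which every class has size $q$, and the enlarged class is shrunk back by one while the deficient class is enlarged by one. The genuine difficulty, and the core of the Hajnal-Szemer\'edi argument, is to show that such an augmenting chain always exists under the hypothesis $k \geq \Delta+1$, or else to exhibit a forbidden dense configuration violating this degree bound. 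This requires a delicate case analysis on the obstructions to the chain, carried out in~\cite{hajnal_proof_1970}; I would either import that argument or use the shorter exchange-on-auxiliary-forest proof due to Kierstead and Kostochka, but in either case this augmenting step is where all the real work of the theorem lies, the preceding reductions being essentially bookkeeping.
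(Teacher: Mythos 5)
This statement is the Hajnal--Szemer\'edi equitable colouring theorem (in the weak form that only bounds the class sizes from above); the paper does not prove it but imports it wholesale from \cite{hajnal_proof_1970}, so there is no in-paper argument to compare yours against. The reductions you do carry out are correct: padding with isolated vertices to force $k \mid n$ preserves $\Delta(G)$ and only strengthens the size bound; after deleting a vertex, the fact that each class has integer size at most $(n-1)/k+1 = q+1-1/k$, hence at most $q$, while the sizes sum to $qk-1$, does force exactly one class of size $q-1$; and the pigeonhole step producing a class disjoint from $N(v)$ uses $k \geq \Delta(G)+1$ correctly.

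As a self-contained proof, however, the proposal has a genuine gap exactly where you flag it: you never establish that the augmenting chain of swaps from the overfull class to the deficient class exists. That is not bookkeeping --- it is the entire content of the theorem, and without it the induction does not close (indeed, for $k \leq \Delta(G)$ the chain can fail to exist, so the degree hypothesis must enter precisely here). Saying you would ``import that argument'' from \cite{hajnal_proof_1970} or use the Kierstead--Kostochka exchange proof reduces your argument to a citation of the result being proved, preceded by an easy reduction. In the context of this paper that is entirely acceptable --- the authors themselves use the theorem as a black box --- but it should then be presented as a citation rather than as a proof; if a self-contained argument is wanted, the augmenting step must actually be carried out.
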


Applying \cref{t:HS} to the $\ell$-th power of $G$ we obtain the following.

\begin{cor}\label{c:HS}
	If $G$ is a graph and $\ell,k\in \N$ are such that $(\Delta(G))^{\ell}+1$, then $V(G)$ can be partitioned into sets  $V_1,\ldots,V_k$ such that $|V_i| \leq \frac{\v(G)}{k}+1$ and the distance between any pair of distinct vertices of $V_i$ in $G$ is at least $\ell+1$ in $G$.
\end{cor}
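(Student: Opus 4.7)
The plan is to follow the hint implicit in the corollary's phrasing and apply Hajnal--Szemer\'edi (Theorem \ref{t:HS}) to the $\ell$-th power $G^{\ell}$ of $G$, i.e.\ the graph on $V(G)$ in which distinct vertices $u,v$ are joined exactly when their distance in $G$ is at most $\ell$. A set is independent in $G^{\ell}$ precisely if its pairwise $G$-distances are all at least $\ell+1$, so once an equitable partition of $V(G^{\ell})$ into independent sets is produced, the conclusion of the corollary is immediate.

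The only substantive step is the degree bound $\Delta(G^{\ell})\leq (\Delta(G))^{\ell}$, since this is what turns the hypothesis $k\geq (\Delta(G))^{\ell}+1$ into the hypothesis $k\geq \Delta(G^{\ell})+1$ needed by Theorem \ref{t:HS}. I would establish this bound by a standard BFS count from an arbitrary vertex $v$: writing $d=\Delta(G)$, there are at most $d$ vertices at distance $1$ from $v$, and each vertex at distance $i\geq 2$ from $v$ has one of its edges committed to a predecessor at distance $i-1$, leaving at most $d-1$ edges available to reach new vertices. Hence the number of vertices at distance exactly $i$ from $v$ is at most $d(d-1)^{i-1}$, and summing yields
\[
\Delta(G^{\ell})\;\leq\;\sum_{i=1}^{\ell} d(d-1)^{i-1}.
\]
A one-line induction on $\ell$ shows this sum is at most $d^{\ell}$: the inductive step $d^{\ell}+d(d-1)^{\ell}\leq d^{\ell+1}$ reduces, after dividing by $d^{\ell}$, to $1+d(1-1/d)^{\ell}\leq d$, which holds for $\ell\geq 1$ and $d\geq 1$ because $(1-1/d)^{\ell}\leq 1-1/d$. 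The degenerate cases $d\in\{0,1\}$ are checked directly.

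With the degree bound in hand, Theorem \ref{t:HS} applied to $G^{\ell}$ produces a partition of $V(G)=V(G^{\ell})$ into $k$ independent sets $V_{1},\ldots,V_{k}$ of $G^{\ell}$ with $|V_{i}|\leq \v(G)/k+1$, which is precisely the partition asserted by the corollary. There is essentially no obstacle: Hajnal--Szemer\'edi is quoted as a black box, and the only piece of actual work is the routine bound on $\Delta(G^{\ell})$ sketched above.
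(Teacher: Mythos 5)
Your proposal is correct and matches the paper's proof exactly: both apply Theorem \ref{t:HS} to the $\ell$-th power of $G$ and use the bound $\Delta(G^{\ell})\leq(\Delta(G))^{\ell}$, which the paper asserts without proof and you verify by a routine BFS count.
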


\begin{proof} Let $G_*$ be the graph with  $V(G_*)=V(G)$  such that two vertices of $G_*$ are adjacent if the  distance between them in $G$ is at most $\ell$. Then $\Delta(G_*) \leq (\Delta(G))^{\ell}$, and so the corollary follows from \cref{t:HS} applied to $G_*$.
\end{proof}	

Our next lemma gives an upper bound on the sparse twin-width of a graph in terms of several parameters, which we optimize in a subsequent lemma to get the upper bounds in our main results.

\begin{lem}\label{l:Hom2Q}
	If $G$ is a graph and $a,b,q,r\in \N$ are such that $a\geq b$, $ \frac{a}{b} \geq \frac{1}{2}  \brm{mad}(G)$, $q\geq 2$ and	
	\begin{equation}\label{e:H2Q0} \v(G) \geq  6ra\cdot q^{ar+1}(\Delta(G))^{3r},
	\end{equation}
	then
\begin{equation}\label{e:BFHCor1}
\stww(G) \leq  \max\left\{  r^{2b}  a^b q^{ar-b(r-1)+1}, \frac{3}{q^{ar}} \v(G)\Delta(G)\right\}.
\end{equation}
\end{lem}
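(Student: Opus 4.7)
The plan is to construct a random homomorphism $\tilde\phi \colon V(G)\to V(\Pi)$, where $\Pi := \Pi(r,r,\ldots,r;b,q)$ has $a$ factors, such that every preimage has size at most $3\v(G)/q^{ar}$; combining this with \cref{l:HomUpper} and \cref{l:GammaTww} then yields~\eqref{e:BFHCor1}. I first apply \cref{l:2factors} (available because $a\geq b$ and $a/b\geq \frac{1}{2}\mad(G)$) to obtain spanning subgraphs $G_1,\ldots,G_a$ of $G$, each admitting an orientation with maximum out-degree at most one and together covering every edge of $G$ at least $b$ times. For each $i$, \cref{l:Hom2Gamma} produces a homomorphism $\phi_i \colon V(G_i) \to \Gamma_*(S_i,r)$ with the property that if $\phi_i(u)$ and $\phi_i(v)$ share any entry then $d_{G_i}(u,v)\leq 3r-3$, which forces $d_G(u,v)\leq 3r-3$ since $G_i$ is a spanning subgraph of $G$. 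Independently for each $i$, I would sample a uniformly random map $\pi_i \colon S_i \to [q]$ and set
\[
\tilde\phi(v) := \bigl(\pi_1(\phi_1(v)),\,\ldots,\,\pi_a(\phi_a(v))\bigr) \in ([q]^r)^a = V(\Pi).
\]
Since coordinate-wise application of $\pi_i$ preserves the closeness relation, $\tilde\phi$ is a homomorphism from $G$ to $\Pi$; and since each $\phi_i(v)$ has $r$ distinct entries, $\tilde\phi(v)$ is uniformly distributed on $V(\Pi)$.

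To control the preimage sizes, I would apply \cref{c:HS} with $\ell := 3r-3$ and $k := (\Delta(G))^{3r-3}+1$, invoking the equitable form of the Hajnal--Szemer\'edi theorem to obtain a partition $V(G)=V_1\sqcup\dotsb\sqcup V_k$ with $\lfloor \v(G)/k\rfloor \leq |V_j| \leq \lceil \v(G)/k\rceil$ and pairwise $G$-distance at least $3r-2$ within each $V_j$. Then for distinct $u,v \in V_j$ and any $i$, the sequences $\phi_i(u)$ and $\phi_i(v)$ are entry-disjoint, so the random vectors $\tilde\phi(v)$ for $v \in V_j$ are mutually independent and uniform on $V(\Pi)$. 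For each $\bs w \in V(\Pi)$ and each $j$, the count $X_{j,\bs w} := |\{v \in V_j : \tilde\phi(v) = \bs w\}|$ is therefore a sum of $|V_j|$ independent $\mathrm{Bernoulli}(1/q^{ar})$ variables with mean $|V_j|/q^{ar}$. The hypothesis~\eqref{e:H2Q0} implies $|V_j|/q^{ar} \geq 3raq(\Delta(G))^3$, which is much larger than $\log(2kq^{ar})$, so the Chernoff bound~\eqref{e:Chernoff} applied with threshold $c_j := 3|V_j|/q^{ar}$ gives $\Pr[X_{j,\bs w} > c_j] \leq e^{-|V_j|/q^{ar}} \leq 1/(2kq^{ar})$. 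Union-bounding over the $kq^{ar}$ pairs $(j,\bs w)$, with probability at least $1/2$ every $X_{j,\bs w}\leq 3|V_j|/q^{ar}$ simultaneously, whence $|\tilde\phi^{-1}(\bs w)| = \sum_j X_{j,\bs w} \leq 3\v(G)/q^{ar}$ for every $\bs w$.

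Fixing such a realisation of $\tilde\phi$, \cref{l:HomUpper} applied with $m := 3\v(G)/q^{ar}$ yields $\stww(G) \leq \max\{\stww(\Pi),\, 3\v(G)\Delta(G)/q^{ar}\}$, and \cref{l:GammaTww} bounds $\stww(\Pi)$ by $r^{2b}a^bq^{ar-b(r-1)+1}$, giving~\eqref{e:BFHCor1}. The main obstacle is retaining the sharp constant $3$ in the preimage bound: this demands that the partition from \cref{c:HS} be close to equitable (so one has a matching \emph{lower} bound on $|V_j|$, not just the stated upper bound) and that $|V_j|/q^{ar}$ dominate $\log(kq^{ar})$, so that the Chernoff threshold $3|V_j|/q^{ar}$ already beats the union bound without any extra additive slack. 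This is precisely what the extra factor $(\Delta(G))^3$ in~\eqref{e:H2Q0} (beyond the $(\Delta(G))^{3r-3}$ that would cancel $k$) is calibrated to provide.
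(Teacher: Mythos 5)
Your proposal is correct and follows essentially the same route as the paper: decompose $G$ via \cref{l:2factors}, map each piece into $\Gamma_*(S_i,r)$ via \cref{l:Hom2Gamma}, compose with independent uniform maps $\pi_i:S_i\to[q]$ to obtain a homomorphism into $\Pi$, control preimage sizes with \cref{c:HS} plus the Chernoff bound and a union bound over $V(\Pi)$, and finish with \cref{l:HomUpper} and \cref{l:GammaTww}. The only deviation is in the concentration step, where you use per-part thresholds $3|V_j|/q^{ar}$ and hence need the equitable (two-sided) form of Hajnal--Szemer\'edi, whereas the paper uses a single threshold $c=\frac{2}{\v(\Pi)}\s{\frac{\v(G)}{k}+1}$ valid for every part, requiring only the upper bound on $|V_j|$ actually stated in \cref{c:HS}; both variants close correctly under hypothesis \eqref{e:H2Q0}.
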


\begin{proof}	
By \cref{l:2factors} there exist spanning subgraphs $G_1,\ldots,G_a$ of $G$, each of which admits an orientation with maximum outdegree one, such that each edge of $G$ belongs to at least $b$ of them. By \cref{l:Hom2Gamma} for each $i \in [a]$ there exists a set $S_i$ and a homomorphism $\psi_i$ from $G_i$ to $\Gamma_*(S_i,r)$ such that for every pair of vertices $u,v$, if the sequences $\psi_i(u)$ and $\psi_i(v)$ share a common element, then the distance between $u$ and $v$ in $G$ is at most $3r-3$.   

Let  $\Pi=\Pi(\underbrace{r,\ldots r}_{a \; \mathrm{times}};b,q)$. We define a random map $\phi: V(G) \to V(\Pi)$ as follows.
For each $i \in [a]$, we independently choose a random map $\pi_i: S_i \to [q]$, with the values of each individual $\pi_i$ chosen uniformly and  independently at random. Then,
let 
$$\phi(v):= (\pi_1(\psi_1(v)),\pi_2(\psi_2(v)), \ldots,\pi_a(\psi_a(v)))$$
for every $v\in V(G)$.
Note that given that $\psi_i$ is a homomorphism and by definition of $\Gamma_*(S_i,r)$, if $uv \in E(G_i)$ then $\psi_i(u)$ and $\psi_i(v)$ are close, and so $\pi_i(\psi_i(v))$ and $\pi_i(\psi_i(u))$ are also close. As each edge of $G$ belongs to at least $b$ subgraphs $G_i$ it follows that $\phi$ is a homomorphism from $G$ to $\Pi$, since adjacency in $\Pi$ is defined by having at least $b$ components which are close. 
As the components of $\psi_i(v)$ are distinct, $\pi_i(\psi_i(v))$ is uniformly distributed on $[q]^r$. As the maps $\pi_i$ are chosen independently, $\phi(v)$ is uniformly distributed on $V(\Pi)$ for each $v \in V(G)$.

Let $k=(\Delta(G))^{3r-3}+1$. By \cref{c:HS}, the vertex set of $G$ can be partitioned into sets $V_1,\ldots,V_k$ such that for every $i\in [k]$, $|V_i| \leq \frac{\v(G)}{k}+1$ and the distance between any pair of distinct vertices of $V_i$ in $G$ is at least $3r-2$ in $G$.
Thus for any pair of distinct vertices $u,v \in V_i$, the sequences $\psi_j(u)$ and $\psi_j(v)$ are disjoint for every $j\in [a]$. It follows that for each $i \in [k]$  the random variables $\{\phi(v)\}_{v \in V_i}$ are mutually independent.

We will bound $\stww(G)$ by applying \cref{l:HomUpper} to $\phi$ chosen so that $$m(\phi) := \max_{\bs u \in V(\Gamma)}|\phi^{-1}(\bs u)|$$ is as small as possible. Let 
$$c = \frac{2}{\v(\Pi)}\s{\frac{\v(G)}{k}+1}
.$$
Recalling that $\phi(v)$ is uniformly distributed on $V(\Pi)$ for every $v\in V(G)$, we have that $\bb{E}[|\phi^{-1}(\bs u) \cap V_i|] = \frac{|V_i|}{\v(\Pi)} \leq c/2$ for every $\bs u \in V(\Pi)$ and  $i \in [k]$. Thus 	$$ \bb{P}[|\phi^{-1}(\bs u) \cap V_i| > c] \leq e^{-c/3} $$ by the Chernoff bound \eqref{e:Chernoff}. It follows from the union bound that
$ \bb{P}[|\psi^{-1}(\bs u) | > kc]  \leq  ke^{-c/3} $ for every $\bs u \in V(\Pi)$, and so $ \bb{P}[m(\phi) \leq kc] \geq 1-\v(\Pi) ke^{-c/3}.$ 
 Using \eqref{e:H2Q0} we have
\begin{align*} 
    \log\s{k \v(\Pi) e^{-c/3} }
    & = \log k + ar\log q  -  \frac{2}{3\v(\Pi)}\s{\frac{\v(G)}{k}+1} \\ 
    &\leq  3r \Delta(G) + q ar  -\frac{2\v(G)}{3 \cdot q^{ar}\Delta(G)^{3r-3}}  \\
    &\leq 3r \Delta(G) + q ar  -4qar\Delta(G)  \\
    &< 0. 
\end{align*}
Thus with positive probability we have $ m(\phi) \leq  kc = \frac{2}{\v(\Pi)} \s{\v(G)+k} \leq 3\frac{\v(G)}{q^{ar}}$.   
Applying \cref{l:HomUpper} to  $\phi$ and using \cref{l:GammaTww} to bound $\stww(\Pi)$  gives $$\stww(G) \leq \max\{\stww(\Pi),  kc\Delta(G)\} \leq \max\left\{  r^{2b}  a^b q^{ar-b(r-1)+1}, \frac{3}{q^{ar}}\v(G)\Delta(G)\right\},$$
as desired.
\end{proof}

The following lemma is obtained from \cref{l:Hom2Q} by optimizing the choice of parameters.
 
 \begin{lem}\label{l:mainSparse2}
 Let $G$ be a graph and let $d := \mad(G)$ and $\ell := \log \v(G)$. If  \begin{equation}\label{e:lCond} 2 + \ell^{-1/6}  \leq d \leq \frac{\ell}{30 \log \ell}\qquad \mathrm{ and} \qquad \log(\Delta(G)) \leq \ell^{1/6},
 \end{equation}
 then
 	\begin{equation}\label{e:main2}
  \stww(G) \leq \Delta(G) \cdot (\v(G))^{\frac{d-2}{2d-2}} \cdot e^{O(d^{-1} \ell^{1/2} \log \ell +\log d)}.
  \end{equation}
 \end{lem}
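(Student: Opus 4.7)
The plan is to apply \cref{l:Hom2Q} to $G$ with integer parameters $a, b, q, r$ chosen so that both summands in its $\max$ are approximately $\Delta(G) \cdot \v(G)^{(d-2)/(2d-2)}$, i.e., within a factor $\exp O(d^{-1}\ell^{1/2}\log\ell+\log d)$ of the target in \eqref{e:main2}. The guiding heuristic: requiring the $q^{ar}$ in $T_2 := 3\v(G)\Delta(G)/q^{ar}$ to cancel the extra $\v(G)$ forces $ar\log q\approx d\ell/(2(d-1))$; combined with $a/b \approx d/2$ (the equality case of the hypothesis $a/b \geq d/2$) this pins down $br\log q\approx \ell/(d-1)$, and then a short computation shows the main factor $q^{(a-b)r}$ of $T_1 := r^{2b} a^b q^{(a-b)r+b+1}$ is automatically $\v(G)^{(d-2)/(2d-2)}$, leaving only the overhead factor $r^{2b} a^b q^{b+1}$ to be absorbed into the error.

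Concretely, I would take $b = 2$, $a = \lceil d \rceil$, $q$ equal to the smallest power of $2$ with $\log q \geq \max\{\log 2,\,6\log\Delta(G)/(d-2)\}$, and $r = \lfloor \ell/(2(d-1)\log q)\rfloor$. The lower bound on $\log q$ ensures $3r \log \Delta(G) \leq (d-2)\ell/(4(d-1))$, which together with $ar \log q \leq (d+1)\ell/(2(d-1))$ makes the log-form $\log(6ra) + (ar+1)\log q + 3r\log\Delta(G)\leq \ell$ of \eqref{e:H2Q0} hold with slack. Then \cref{l:Hom2Q} gives $\stww(G) \leq \max\{T_1, T_2\}$, with $T_2 \leq 3\Delta(G)\v(G)^{(d-2)/(2d-2)}$ automatic. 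For $T_1$, the overhead satisfies $\log T_1 - \log(\Delta(G)\v(G)^{(d-2)/(2d-2)}) = 4\log r + 2\log a + 3\log q - \log \Delta(G) = O(\log \ell + \log d + (\log \Delta(G))/(d-2))$, and this fits inside $O(d^{-1}\ell^{1/2}\log\ell + \log d)$ by a short case analysis on $d$ using \eqref{e:lCond}: for $d \leq \ell^{1/6}\log\ell$ the first term of the error budget is already $\gtrsim \ell^{1/3}\log\ell$ and absorbs everything, while for $d > \ell^{1/6}\log\ell$ the bound $\log\Delta(G)\leq\ell^{1/6}$ gives $(\log\Delta(G))/(d-2) = O(1)$ and $d\leq\ell/(30\log\ell)$ forces $\log d = \Theta(\log \ell)$.

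The main technical obstacle is that $d = \mad(G)$ need not be an integer; then $\lceil d \rceil - d \in (0,1)$ introduces an additional factor of up to $q^{(\lceil d\rceil - d)\, r} = e^{O(\ell/(d-1))}$ in $T_1$, which grossly exceeds the error budget for $b = 2$. The remedy is to take $b$ larger, so that $a := \lceil bd/2 \rceil$ satisfies $a - bd/2 \leq 1$ with induced discrepancy $(a - bd/2)\cdot r \log q \leq r \log q = \ell/(b(d-1))$ lying within the error budget; this requires $b \gtrsim \ell/\bigl((d-1)(d^{-1}\ell^{1/2}\log\ell + \log d)\bigr)$, and one must verify that such a choice is still compatible with keeping the other overhead terms $b\log r$ and $b\log a$ inside the budget throughout the admissible range of $d$. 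In the $d$-regular case of \cref{t:regular} (and whenever $d$ is an integer) no rounding is needed and $b = 2$ already suffices.
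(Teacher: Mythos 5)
Your final remedy is essentially the paper's proof: it applies \cref{l:Hom2Q} with $b=\lceil \ell^{1/2}/(d\log\ell)\rceil$, $a=\lceil db/2\rceil$, $r=\lfloor \ell/(2db\log\ell)\rfloor$ and $q=\lceil e^{\kappa}\rceil$ where $\kappa=\ell/((2a-b)r)$, so that the rounding discrepancy contributes only $\kappa r/(d-1)=O(d^{-1}\ell^{1/2}\log\ell)$ and the overhead $r^{2b}a^bq^{b+1}$ has logarithm $b\cdot O(\log d+\log\ell)$, absorbed by the same case split on $d$ that you describe; the verifications you defer (condition \eqref{e:H2Q0} and the overhead bound) do go through with these parameters. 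One remark: your opening $b=2$ attempt fails for a second reason beyond non-integrality of $d$ --- for $d$ close to $2+\ell^{-1/6}$ the slack in \eqref{e:H2Q0} is only about $\ell(d-2)/(2(d-1))$, and the $+1$ in $a\le bd/2+1$ already consumes it unless $b\gtrsim 1/(d-2)$ --- but the large choice of $b$ you arrive at handles this as well.
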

 
 \begin{proof}
 Let $b :=  \left\lceil \frac{\ell^{1/2}}{d\log \ell } \right\rceil$, $a := \lceil \frac{db}{2} \rceil \leq db$, $r := \left\lfloor \frac{\ell}{2db \log \ell} \right\rfloor$. First note that
 \begin{align} 
 r & \leq \frac{\ell}{2d \log \ell\cdot \frac{\ell^{1/2}}{d\log \ell}} \leq \ell^{1/2}, \qquad \mathrm{and}  \label{e:r}\\
 	ar &\leq db \cdot  \frac{\ell}{2db\log \ell} \leq \frac{\ell}{2\log \ell} \label{e:ar}
 	\end{align}
 	
 	Let $\kappa := \frac{\ell}{(2a-b)r}$, i.e.  $\v(G)= e^{\kappa(2a-b)r}$. Finally, let  $q:=\lceil e^{\kappa} \rceil\leq 2 e^{\kappa}$. 
 	
 	Our goal is to apply \cref{l:Hom2Q}, but first let us estimate $\kappa$.
 	For the lower bound we have 	\begin{align*}
  \kappa 
  \geq \frac{\ell}{2ar} 
  \stackrel{\eqref{e:ar}}{\geq} 
  \frac{\ell}{2\frac{\ell}{2\log \ell}} 
  =  \log \ell.
 	\end{align*}
 	Thus 
  \begin{equation}\label{e:qKappa} 
  ar\log  \s{\frac{q}{e^{\kappa}}} 
  \leq ar(\log(1+e^{-\kappa})) 
  \leq are^{-\kappa} 
  \leq \frac{ar}{\ell} 
  \stackrel{\eqref{e:ar}}{\leq} 1, 	
 	\end{equation}
 	and so
  \begin{equation}\label{e:qKappa2} 
  q^{ar} 
  = \s{\frac{q}{e^{\kappa}}}^{ar} \cdot e^{\kappa a r} 
  \stackrel{\eqref{e:qKappa}}{\leq} e^{\kappa ar  +1}.
 	\end{equation}
 	Similarly for the upper bound on $\kappa$, assuming $\v(G)$, and thus $\ell$, are large enough, we have
 	\begin{align*}\frac{\ell}{\kappa} &  = (2a-b)r \geq (d-1)br \\&\geq (d-1)b \s{ \frac{\ell}{2db \log \ell} -1}  \geq  \frac{d-1}{2d} \frac{\ell}{ \log \ell} - bd \\ &\geq \frac{1}{4} \frac{\ell}{ \log \ell}- \s{\frac{\ell^{1/2}}{d\log \ell } + 1}d \geq \frac{1}{6} \frac{\ell}{ \log \ell},
 	\end{align*}
 	where the last inequality uses that $d\leq \frac{\ell}{30\log \ell}$ and that $\ell$ is large.
 	Thus we have
 	\begin{equation}\label{e:Kappalow}  \kappa \leq 6\log \ell.
 	\end{equation}

 	Next, we verify that \eqref{e:H2Q0} holds. As $a\geq \frac{db}{2}$, we have \begin{equation}\label{e:ekar}
  e^{\kappa ar} 
  = (\v(G)))^{\frac{a}{2a-b}} 
  \leq (\v(G))^{\frac{d}{2d-2}}.	\end{equation} We have that 
 	\begin{align*}
 	6 ra \cdot q^{ar+1}(\Delta(G))^{3r} 
    & \stackrel{\eqref{e:r}}{\leq} 6 ra \cdot q^{ar+1}(\Delta(G))^{3\ell^{1/2}}\\
    & \stackrel{\eqref{e:qKappa2}}{\leq} 6 \cdot ra \cdot q \cdot e^{\kappa ar  +1}(\Delta(G))^{3\ell^{1/2}}\\
    & \stackrel{\eqref{e:ekar}}{\leq} 6e\cdot ra\cdot q \cdot (\v(G))^{\frac{d}{2d-2}}\cdot (\Delta(G))^{3\ell^{1/2}}\\
    & \stackrel{\eqref{e:ar}}{\leq} 6e\cdot q \cdot \frac{\ell}{2\log \ell} \cdot (\v(G))^{\frac{d}{2d-2}}\cdot (\Delta(G))^{3\ell^{1/2}}\\
 	 &\leq 12 e \cdot  e^{\kappa} \cdot \frac{\ell}{2\log \ell}  \cdot  (\v(G))^{\frac{d}{2d-2}} \cdot (\Delta(G))^{3 \ell^{1/2}}\\ 
   & \stackrel{\eqref{e:Kappalow}}{\leq}  \ell^{7} \cdot (\v(G))^{\frac{d}{2d-2}} \cdot (\Delta(G))^{3 \ell^{1/2}},
	\end{align*}
    where the last inequality holds for $\log \ell
    \geq 6e$, and so to verify \eqref{e:H2Q0} it suffices to show that
 	\begin{equation}\label{e:H2Q+}  
 	\ell^{7}  \cdot (\Delta(G))^{3 \ell^{1/2}} \leq (\v(G))^{\frac{d-2}{2d-2}}.
 	\end{equation}
  
 	By \eqref{e:lCond} we have
 	$$ \frac{d-2}{2d-2} \geq \frac{\ell^{-1/6}}{2 + 2\ell^{-1/6}} \geq \frac{1}{4}\ell^{-1/6} $$ and therefore
  \begin{align*}7\log \ell  +  3 \ell^{1/2} \log \Delta(G) \stackrel{\eqref{e:lCond}}{\leq} 7\log \ell  +  3 \ell^{2/3} \leq \frac{1}{4}\ell^{5/6} \leq \frac{d-2}{2d-2} \ell,
 \end{align*}
 where the second inequality holds for $\ell$ large enough. Exponentiating the above we obtain \eqref{e:H2Q+}.
 
 	Thus \eqref{e:H2Q0} holds and so \eqref{e:BFHCor1}  holds by \cref{l:Hom2Q}. As
 	$\v(G) \leq q^{(2a-b)r}$, we have
   \begin{align*} \stww(G)  &
 	\leq \max\left\{  r^{2b}  a^b q^{ar-b(r-1)+1}, \frac{3}{q^{ar}}\v(G)\Delta(G)\right\} \\ &\leq 3r^{2b}  a^b q^{b+1}\Delta(G) q^{(a-b)r} \\ &=  \s{\Delta(G) \cdot (\v(G))^{\frac{d-2}{2d-2}} } \cdot \s{
	3 (r^{2b}  a^b e^{\kappa(b+1)}) \cdot \fs{q}{e^\kappa}^{(a-b)r+b+1} \cdot \frac{e^{\kappa(a-b)r}}{(\v(G))^{\frac{d-2}{2d-2}}}}.
 \end{align*}
 	Thus to verify \eqref{e:main2} it remains to show that \begin{equation}\label{e:Cor3} 3  (r^{2b}  a^b e^{\kappa(b+1)}) \cdot \fs{q}{e^\kappa}^{(a-b)r+b+1} \cdot \frac{e^{\kappa(a-b)r}}{(\v(G))^{\frac{d-2}{2d-2}}} = e^{O(d^{-1} \ell ^{1/2}\log \ell + \log d)}.
 	\end{equation}
 	We prove \eqref{e:Cor3} by showing that the logarithms of the terms on the left side are of the form $O(d^{-1} \ell ^{1/2}\log \ell + \log d)$,  starting with the last: 
 	\begin{align*}
 	\log \s{\frac{e^{\kappa(a-b)r}}{(\v(G))^{\frac{d-2}{2d-2}}}} 
  &=  \s{(a-b)r - \frac{d-2}{2d-2}(2a-b)r}\kappa \\ 
  &=\s{\frac{2}{2d-2}a-\frac{d}{2d-2}b} \kappa r\\
  &\leq  \s{\frac{2}{2d-2}\s{\frac{db}{2}+1}-\frac{d}{2d-2}b} \kappa r \\ 
  &= \frac{\kappa r}{d-1} = O\s{\frac{\log \ell \cdot \ell^{1/2}}{d}},
 	\end{align*}
 	where the last estimate uses \eqref{e:r} and \eqref{e:Kappalow}.
 	For the second term, using that $r\geq 2$ for sufficiently large $\ell$, we get $$\log\s{\fs{q}{e^\kappa}^{(a-b)r+b+1}} \leq ar\log  \s{\frac{q}{e^{\kappa}}}  \stackrel{\eqref{e:qKappa}}{\leq} 1.$$ 
 	Finally, for the first term,
 	\begin{equation}\label{e:last}
 	\log \s{r^{2b}  a^b e^{\kappa(b+1)} } =  b \cdot O(  \log r + \log d + \log b + \kappa) = b \cdot O(\log d + \log \ell).
 	\end{equation}
 	If $d \leq \frac{\ell^{1/2}}{\log \ell }$ then $\log d = O( \log \ell)$ and $b = O (  \frac{\ell^{1/2}}{d\log \ell })$ and so the expression in  \eqref{e:last} has the form $O ( d^{-1}\ell^{1/2})$ as desired. Otherwise, $b=1$ and $\log \ell = O(\log d)$ and so the expression in \eqref{e:last} has the form $O(\log d)$. This yields the desired bound for the first term in \eqref{e:Cor3}, and so \eqref{e:main2} holds.
  \end{proof}

\cref{t:upper}, which we restate for convenience,  readily follows from \cref{l:mainSparse2}.\upper*

\begin{proof}
	Let $G$ be an $n$-vertex graph, $\ell:=\log n$ and $d := \mad(G)$.

	If $d = \Omega(\frac{\ell}{\log \log \ell})$ then by \eqref{e:mUpper} we have
	$$\tww(G) = O(\sqrt{dn}) = n^{\frac{d-2}{2d-2}} O(\sqrt{d}\cdot n^{\frac{1}{2d-2}})  = n^{\frac{d-2}{2d-2}} e^{O\left(\frac{\ell}{d} + \log d\right)} =  n^{\frac{d-2}{2d-2}} e^{O( \log d)}.$$
	Thus \eqref{e:mainUpper} holds, and so we may assume that $d \leq \frac{\ell}{30 \log \ell}$. As it suffices to establish \eqref{e:mainUpper} when $n$ is sufficiently large (and sufficiently large as a function of $\eps$ in (ii)),  we assume from now on that $\ell$ is large enough to satisfy subsequent inequalities. In particular, we can  assume $\ell^{-1/6} \leq \eps/2$ in (ii), and so that $d \geq 2 + \ell^{-1/6}$.
	
	Given  $2 + \ell^{-1/6}  \leq d \leq \frac{\ell}{30 \log \ell}$, \eqref{e:mainUpper} follows from \cref{l:mainSparse2}, as long as \begin{equation}\label{e:logDelta} \log(\Delta(G)) \leq \ell^{1/6} \qquad \mathrm{and}  \qquad \log \Delta(G) = O(d^{-1}\ell^{1/2}\log \ell + \log d).\end{equation}  Clearly, \eqref{e:logDelta} holds in (i) as $\Delta(G) = d$ in this case.
	 
	 It remains to verify that  \eqref{e:logDelta} holds with high probability if $G$ is as in (ii). It is well-known and is an easy consequence of Chernoff type bounds that $\Delta(G) = O (\ell \frac{m}{n}) = O(\ell d)$ with high probability, and so  $\log \Delta(G) = O(\log \ell)$ with high probability. Thus \eqref{e:logDelta}  holds with high probability as  $d^{-1}\ell^{1/2}\log \ell + \log d \geq \frac{1}{2}\log \ell$ (which can be easily seen by splitting into cases $d\geq l^{1/2}$ and $d\leq l^{1/2}$).
\end{proof}
 
\section{Lower bounds}\label{s:lower}
In this section we prove \cref{t:lower}, establishing the lower bounds in our main results. 

We start by showing that for every graph $G$ with  $\stww(G) \leq w$ we can divide the vertices of $G$ into roughly equal parts, close to any prescribed size, so that identifying all vertices in each part yields a graph with average degree close to $w$. As mentioned in the introduction, this fact will be the basis of our counting argument.
  
If $G$ is a graph and $\mathcal P$ is a partition of $V(G)$, we define the \emph{quotient graph} $G/\mathcal P$ as the graph on vertex set $\mathcal P$ in which distinct $P_1,P_2\in \mathcal P$ are adjacent if and only if there exists at least one edge in $G$ between a vertex in $P_1$ and a vertex in $P_2$. If $X\subseteq V(G)$, we denote by $\mathcal P-X$ the partition of $G\setminus X$ obtained by restricting each part of $\mathcal P$ to $V(G)\setminus X$. We say $G$ is $w$-degenerate if every subgraph of $G$ contains a vertex of degree at most $w$.

\begin{lem}\label{lem:Kpartition}
	If $G$ is a graph and $K<\v(G)$ is an integer, then there exists a  partition $\mathcal P$ of $V(G)$ such that $|\mathcal P| \leq K$, $|P| \leq \frac{2\v(G)}{K}$ for every $P \in \mc{P}$ and  $G/\mathcal P$ is $\stww(G)$-degenerate.
\end{lem}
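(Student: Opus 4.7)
Let $w := \stww(G)$ and fix a $w$-contraction sequence $(G_1 = G, G_2, \ldots, G_n)$ with associated merge tree $T$ and partitions $\mathcal P_1, \ldots, \mathcal P_n$ of $V(G)$. The central input is that $\Delta(G_i) \leq w$ for every $i$, which immediately gives that each quotient $G/\mathcal P_i$ is $w$-degenerate.

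I would proceed by strong induction on $\v(G)$; the base case $\v(G) \leq K$ is trivial (take the partition into singletons). For the inductive step, let $i^* := \min\{i : |\mathcal P_i| \leq K\}$. If every part of $\mathcal P_{i^*}$ has size $\leq 2\v(G)/K$, set $\mathcal P := \mathcal P_{i^*}$ and we are done, since $G/\mathcal P_{i^*}$ is $w$-degenerate. Otherwise, for each oversized part $P \in \mathcal P_{i^*}$, observe that the merges of the sequence whose internal nodes lie strictly inside the subtree of $T$ rooted at $P$ assemble (in their given order) into a $w$-contraction sequence for the induced subgraph $G[P]$; hence $\stww(G[P]) \leq w$. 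Applying the lemma inductively to $G[P]$ with parameter $K_P := \lceil |P|K/\v(G) \rceil$ yields a partition $\mathcal P_P$ of $P$ with $|\mathcal P_P| \leq K_P$, every part of size $\leq 2|P|/K_P \leq 2\v(G)/K$, and $G[P]/\mathcal P_P$ being $w$-degenerate. Then let $\mathcal P$ be the refinement of $\mathcal P_{i^*}$ obtained by replacing each oversized $P$ with $\mathcal P_P$. The part-size bound on $\mathcal P$ is immediate, and for the $w$-degeneracy of $G/\mathcal P$ one would combine the degeneracy ordering of $G/\mathcal P_{i^*}$ (outer) with those of each $G[P]/\mathcal P_P$ (inner), placing all members of a single $\mathcal P_P$ consecutively in the global order.

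I expect two related obstacles to require the most care. First, the count $|\mathcal P|$ needs to be pinned down at $\leq K$: the naive sum $|\mathcal P| \leq (|\mathcal P_{i^*}| - b) + \sum_P K_P$, where $b$ denotes the number of oversized parts, only yields $|\mathcal P| \leq K + O(b)$ because of the ceilings in $K_P$. Tightening to $\leq K$ exactly will likely require choosing the stage $i^*$ more carefully (for instance, the last stage at which all parts have size $\leq 2\v(G)/K$) and a refined accounting that balances undersized against oversized parts, exploiting the fact that the oversized parts collectively occupy a large fraction of $\v(G)$. Second, the combination of inner and outer degeneracy orderings is delicate: a single edge of $G/\mathcal P_{i^*}$ between two oversized parts $P,P'$ may produce many edges of $G/\mathcal P$ between $\mathcal P_P$ and $\mathcal P_{P'}$, so a naive concatenation of orderings can inflate back-counts. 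The proof will need an ordering that synchronizes the levels of $\mathcal P_P$ and $\mathcal P_{P'}$ relative to each other (or else exploits the $w$-degeneracy of $G/\mathcal P_{i^*}$ to bound the aggregate contribution). These two issues together constitute the technical heart of the argument.
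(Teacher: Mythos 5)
Your plan is not a proof: both of the obstacles you flag at the end are genuine, and at least the second one defeats the approach as described rather than being a technicality. On the counting side, the problem is worse than the ceilings: even ignoring them, refining $\mathcal P_{i^*}$ (which has exactly $K$ parts) replaces each oversized part $P$ by at least $\lceil |P|K/\v(G)\rceil \ge 2$ parts while keeping all $K-b$ undersized parts, and since nothing prevents the undersized parts from being singletons, the total can approach $2K$; no choice of $i^*$ at which ``all parts are small'' is guaranteed to exist with $|\mathcal P_{i^*}|\le K$. On the degeneracy side, once you refine two adjacent oversized parts $P,P'$ independently, a single part $Q\in\mathcal P_P$ can be adjacent to essentially all of $\mathcal P_{P'}$; the inner degeneracy orderings control only the edges inside $G[P]/\mathcal P_P$ and $G[P']/\mathcal P_{P'}$, and neither the outer ordering nor the unspecified ``synchronization'' bounds these cross edges, so the back-degree in the combined ordering is not bounded by $w$, nor by any function of $w$ alone.

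The paper's proof avoids both problems by not fixing a single stage and refining it. It extracts the parts $P_1,P_2,\dots$ one at a time: having removed $X_{r-1}=P_1\cup\dots\cup P_{r-1}$, it takes the \emph{earliest} stage $i_r$ at which some part of $\mathcal P_{i_r}-X_{r-1}$ has size at least $\v(G)/K$ and lets $P_r$ be that part. Minimality of $i_r$ forces $P_r$ to be a union of at most two parts each of size $<\v(G)/K$, giving $|P_r|\le 2\v(G)/K$; all parts except possibly the last have size $\ge \v(G)/K$, giving $|\mathcal P|\le K$; and the monotonicity $i_r\le i_{r+1}\le\cdots$ is exactly what makes the degeneracy work: the at most $w$ neighbouring parts of $P_r$ in $G/\mathcal P_{i_r}$ each intersect at most one of the later parts $P_{r+1},\dots,P_k$, so $P_r$ has at most $w$ neighbours among later parts. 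This global monotonicity across the chosen parts is the ingredient your recursive refinement lacks. (Your observation that $\stww(G[P])\le w$ for a part $P$ of some $\mathcal P_i$ is correct, but it does not address the cross-edge problem.)
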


\begin{proof}
	Let $n:=\v(G)$ and $w:=\stww(G)$. Let $\mathcal P_1,\dots,\mathcal P_n$ be the sequence of partitions of $V(G)$ such that
	\begin{enumerate}
		\item  $|\mc{P}_i|=n-i+1$ for every $i\in [n]$,
		\item  $\mc{P}_{i+1}$ is a coarsening of $\mc{P}_i$ for every $i\in [n-1]$, and
		\item $G=G/\mc{P}_1,G/\mc{P}_2,\ldots, G/\mc{P}_n$ is a $w$-contraction sequence for $G$,
	\end{enumerate}
    which exists by definition of $\stww(G)$ and contraction sequences.

	We define a partition $\mathcal P=(P_1,\dots,P_k)$ for some $k \leq K$ recursively as follows. Suppose we have already defined $P_1,\dots,P_{r-1}$. Let $X_{r-1}=P_1\cup\dots\cup P_{r-1}$. If $V(G)\setminus X_{r-1}=\emptyset$, we let $k=r-1$. Otherwise, we wish to define $P_r$.
	
	Let $i_r$  be the minimum index $i\leq n$ such that $\mathcal P_{i} -X_{r-1}$ contains a part of size at least $\frac{\v(G)}{K}$ and let $P_r$ be this part. If no such index $i$ exist, let $i_r=n$ and set $P_r=V(G)\setminus X_{r-1}$. Note that in the latter case, $P_r$ is the sole part of $\mathcal P_n-X_{r-1}$, so we have $|P_r|<\frac{\v(G)}{K}$.
	
	We now show that $\mc{P}=(P_1,\dots,P_k)$ satisfies the requirements of the lemma.
	Clearly $\mathcal P$ is a partition of $G$. We have $|P_r| \geq  \frac{\v(G)}{K}$ for $r \in [k-1]$ and $P_k \neq \emptyset$. Thus $\v(G) = \sum_{r=1}^{k}|P_r| > (k-1) \frac{\v(G)}{K}$, thus $|\mathcal P| =k \leq K$ as desired.
	
	If $i_r=1$, $|P_r|=1<\frac{\v(G)}{K}$. Otherwise, $i_r>1$. If $P_r$ was chosen in the case having size at least $\frac{\v(G)}{K}$, it is a union of at most two parts of $\mathcal P_{i_r-1} -X_{r-1}$, and each part of $\mathcal P_{i_r-1} -X_{r-1}$ has size less than $\frac{\v(G)}{K}$ by the choice of $i_r$, and so it follows that $|P_r| \leq \frac{2\v(G)}{K}$. Otherwise, we have noted above that $|P_r|<\frac{\v(G)}{K}$. Hence, $|P_r| \leq \frac{2\v(G)}{K}$ for every $r\in [k]$.
	
	We now claim that $G/\mathcal P$ is $w$-degenerate. It suffices to show that for every $r\in [k-1]$, there are at most $w$ parts among  $P_{r+1},\dots,P_{k}$ with neighbours in $P_r$. Indeed, $P_r=P'_r-X_{r-1}$ for some $P'_r \in \mc{P}_{i_r}$. As $G / \mc{P}_{i_r}$ has maximum degree at most $w$, there are parts $\hat P_1,\hat P_2,\ldots,\hat P_s\in \mc{P}_{i_r}$ for some $s \leq w$ such that every neighbour of $P_r$ in $G-X_{r-1}$ belongs to one of them. As $i_r \leq i_{r+1} \leq \ldots \leq i_k$, each part $\hat P_j$ intersects at most one of $P_{r+1},\dots,P_{k}$, implying the claim.

    This concludes the proof of the lemma.
\end{proof}

In estimates below we frequently use the following standard bounds on binomial coefficients: $$\binom{n}{m} \leq \fs{en}{m}^m$$
for all integer $n \geq m > 0$.

\begin{lem}\label{l:Lower} There exists $C_0 > 0$ satisfying the following. For every $0 < \eps \leq 1/3$, there exists $n_0\in \N$ such that for $n,m\in \N$ such that $n \geq n_0$ and $ (1 + \eps)n \leq m \leq  \frac{1}{10}n{\log n}$ there are at most $ \s{C_0\eps \frac{n^2}{m}}^{m} $
graphs $G$ with $V(G)=[n]$, $\e(G)=m$ and $$\stww(G) \leq \eps d \fs{n}{\log n}^{\frac{d-2}{2d-2}},$$
where $d= \frac{2m}{n}.$
\end{lem}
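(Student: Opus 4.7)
Let $w := \eps d (n/\log n)^{(d-2)/(2d-2)}$, and suppose $G$ is a graph with $V(G)=[n]$, $\e(G)=m$, and $\stww(G) \le w$. The plan is to apply \cref{lem:Kpartition} to $G$ with a carefully chosen integer $K$ (depending on $n$, $m$, $\eps$) to obtain a partition $\mathcal P$ of $V(G)$ with $|\mathcal P| \le K$, $|P| \le 2n/K$ for every $P \in \mathcal P$, and $G/\mathcal P$ being $w$-degenerate. Every such $G$ is then encoded (non-uniquely) by the triple $(\mathcal P, H, G)$ with $H := G/\mathcal P$, and it suffices to bound the number of valid triples.

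The number of such triples is bounded by a product of three factors. First, the number of ordered partitions of $[n]$ into at most $K$ parts of size at most $2n/K$ is at most $K^n$. Second, the number of $w$-degenerate graphs on the labeled vertex set $\mathcal P$ is at most $\binom{\binom{K}{2}}{wK} \le (eK/(2w))^{wK}$, using the standard estimate $\binom{N}{k}\le(eN/k)^k$ together with the fact that a $w$-degenerate graph on $K$ vertices has at most $wK$ edges. Third, given $(\mathcal P, H)$, the number of $m$-edge graphs $G$ with $G/\mathcal P = H$ is at most the number of ways to select $m$ edges from the set of within-part pairs together with the pairs between distinct parts $P_1,P_2$ with $\{P_1,P_2\}\in E(H)$; since $|P|\le 2n/K$ and $|E(H)|\le wK$, this set has size at most $\sum_P \binom{|P|}{2} + \sum_{\{P_1,P_2\}\in E(H)}|P_1||P_2| \le (2+4w)n^2/K \le 6wn^2/K$, so the third factor is at most $\binom{6wn^2/K}{m} \le (6ewn^2/(Km))^m$.

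It remains to choose $K$ so that the product
$$K^n \cdot (eK/(2w))^{wK} \cdot (6ewn^2/(Km))^m$$
is at most $(C_0 \eps n^2/m)^m$. Taking logarithms and rearranging, this reduces to an inequality of the shape $n\log K + wK\log(eK/(2w)) \le m\log(C_0\eps K/(6ew))$. A natural balancing choice is to take $K$ comparable to $n^{d/(2d-2)}$ (times a factor depending on $\eps$ and $C_0$), which makes $wK$ comparable to $m-n$ and forces the three costs to match the target logarithm $m\log(C_0\eps n^2/m)$ after using the surplus $m-n \ge \eps n$ on the left-hand side.

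The main obstacle is carrying out this optimization uniformly over the entire range $2(1+\eps)\le d \le \tfrac{1}{5}\log n$ while keeping $C_0$ independent of $\eps$. In particular, the $\log(1/\eps)$ factors that appear in both $\log(eK/(2w))$ and $\log(C_0\eps K/(6ew))$ must be absorbed into the slack provided by $m-n\ge\eps n$; the $\log\log n$ corrections coming from the exact form $(n/\log n)^{(d-2)/(2d-2)}$ of $w$ must be controlled; and one may need to split into regimes according to the size of $d$ (small $d$ near $2$ versus $d$ comparable to $\log n$) and refine the definition of $K$ and the placement-count bound accordingly.
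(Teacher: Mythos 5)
Your counting setup is exactly the paper's: apply \cref{lem:Kpartition} to get a partition $\mathcal P$ into at most $K$ parts of size at most $2n/K$ with $G/\mathcal P$ being $w$-degenerate, then bound the number of graphs by (partitions) $\times$ (sparse quotient graphs) $\times$ (edge placements), with the same three estimates $K^n$, $\binom{\binom{K}{2}}{wK}\le (eK/(2w))^{wK}$ and $\binom{O(wn^2/K)}{m}$. However, there is a genuine gap: the lemma \emph{is} the quantitative optimization, and you have not carried it out. Worse, the balancing heuristic you propose points in the wrong direction. The correct choice is $K=\bigl\lfloor \frac{d^2 n}{w\log n}\bigr\rfloor$, i.e.\ $K\approx \frac{d}{\eps}\s{\frac{n}{\log n}}^{\frac{d}{2d-2}}$, which makes $wK\approx \frac{d^2n}{\log n}=\frac{2dm}{\log n}$. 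This is \emph{not} comparable to $m-n=\frac{(d-2)n}{2}$ (for fixed $d$ it is $o(n)$), and the surplus $m-n\ge\eps n$ is not what absorbs the quotient-graph cost. Rather, with this $K$ one has $\frac{eK}{2w}\approx \frac{e}{2\eps^2}\s{\frac{n}{\log n}}^{\frac{1}{d-1}}$, so the quotient-graph factor is at most $e^{O(m)}\cdot\eps^{-4dm/\log n}$ --- a benign multiplicative loss per edge since $d\le\frac15\log n$.

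The factor of $\eps$ in the target bound then comes from a different source: after the cancellation $K^{2/d-1}\cdot w=\s{\frac{w\log n}{d^2n}}^{\frac{d-2}{d}}w = w^{\frac{2d-2}{d}}\s{\frac{\log n}{d^2 n}}^{\frac{d-2}{d}}$ and substituting $w\approx \eps d\s{\frac{n}{\log n}}^{\frac{d-2}{2d-2}}$, the power of $n$ cancels exactly and one is left with $\eps^{2-\frac2d}$ times absolute constants and $d^{O(1/d)}$. Combining with the $\eps^{-4d/\log n}$ loss gives $N^{1/m}\le C_0\,\eps^{2-\frac2d-\frac{4d}{\log n}}\cdot\frac{n^2}{m}$, and it is precisely the hypothesis $d\ge 2+2\eps$ (together with $n\ge n_0(\eps)$) that guarantees $\frac2d+\frac{4d}{\log n}\le 1$, hence an exponent of $\eps$ at least $1$; no case split on $d$ is needed. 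You also need to address two small points your sketch skips: $w$ and $K$ must be integers with $K<n$ for \cref{lem:Kpartition} to apply (the paper secures $w\ge d$, hence $K\le \frac{dn}{\log n}<n$, via the choice of $n_0$), and the uniformity of $C_0$ in $\eps$ is automatic once the exponent of $\eps$ is $\ge 1$, since $\eps\le 1/3<1$.
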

 
\begin{proof}
Set $C_0:=24000$. Let $\varepsilon$ be given.

Choose $n_0$ large enough such that $\frac{\varepsilon}{2}\left(\frac{n_0}{\log n_0}\right)^{\frac{\varepsilon}{2+4\varepsilon}}\geq 1$, and further such that $\frac{2}{d}+\frac{4d}{\log n} \leq 1$ for every $n,d$ such that $n\ge n_0$ and $2+2\eps \leq d \leq \frac{1}{5}\log n$.

Let $n,m,d$ be as in the statement.

Let $w := \left\lfloor  \eps d \fs{ n}{\log n}^{\frac{d-2}{2d-2}} \right\rfloor$ and $K :=\left\lfloor \frac{d^2n}{w\log n}\right\rfloor$. 

By our choice of $n_0$, we can ensure that $w\geq \frac{\eps}{2} d \fs{ n}{\log n}^{\frac{d-2}{2d-2}} \ge d$ and thus $K\le \frac{dn}{\log n}\le \frac{n}{5}<n$. Furthermore, as $w<d\frac{n}{\log n}$ we have that $\frac{d^2n}{w\log n}\geq d> 2$ and so $K> \frac{d^2n}{2w\log n}$.
	
By \cref{lem:Kpartition}, if $G$ is a graph with $V(G)=[n]$ and $\stww(G) \leq w$, then there exists a partition $\mc{P}=(P_1,\ldots,P_K)$ of $[n]$ such that $|P_i| \leq \frac{2n}{K}$ for every $i \in K$ and $G/\mathcal P$ is $w$-degenerate. We wish to compute the number of graphs $G$ for which this is possible.
 
 For a given partition $\mathcal P=(P_1,\ldots,P_K)$ of $[n]$ and $F \subseteq \binom{[K]}{2}$, let $E(\mc{P},F)$ be the set of all $\{u,v\} \in [n]^2$ such that either $u,v \in P_i$ for some $i \in [K],$ or $u \in P_i$ and $v \in P_j$ for some $\{i,j\} \in F$. In other words, $E(\mc{P},F)$ is the set of possible edges for a graph $G$ on $[n]$ if $E(G/\mathcal P)\subseteq F$ (writing $V(G/\mathcal P)=[K]$ for simplicity).

 	Let $N$ be the number of graphs $G$ with $V(G)=[n]$, $\e(G)=m$ and $\stww(G) \leq w$. Every such graph $G$ is determined by the choice of a partition $\mc{P}$ of $[n]$ with $|\mc{P}|=K$, a set $F \subseteq \binom{[K]}{2}$ with $|F|=wK$ (since it follows from degeneracy that $\e(G/\mathcal P) \leq wK$) and a choice of $E(G) \subseteq E(\mc{P},F)$ with $|E(G)|=m$.
	 
	There are at most $K^n$ choices of $\mc{P}$. The number of choices of $F$ is upper bounded by  
 \begin{align*}
 \binom{\binom{K}{2}}{wK} 
 &\leq \fs{eK}{2w}^{wK} 
 \leq \fs{ed^2n}   {2w^2\log n}^{\frac{d^2 n}{\log n}}  
 \leq \fs{ed^2n}   { \frac{\eps^2}{4} d^2 \fs{ n}{\log n}^{\frac{2d-4}{2d-2}} \log n}^{\frac{d^2 n}{\log n}} \\ 
 &\leq \fs{4e}{\eps^2}^{\frac{d^2 n}{\log n}} n^{\frac{d^2 n}{(d-1)\log n}} 
 =\fs{4e}{\eps^2}^{\frac{2dm}{\log n}} e^{\frac{2dm}{d-1}} 
 \leq e^{4m}\cdot \fs{4e}{\eps^2}^{\frac{2d m}{\log n}}
 \end{align*}
	Finally, using that 
 $$|E(\mc{P},F)| = \sum_{\{i,j\}\in F}|P_i||P_j| +  \sum_{i \in [K]}\binom{|P_i|}{2} \leq (|F|+K)\fs{2n}{K}^2= K(w+1)\fs{2n}{K}^2\leq \frac{8wn^2}{K},$$ the number of choices of  $E(G) \subseteq E(\mc{P},F)$ with $|E(G)|=m$ is upper bounded by 
 $$ \binom {|E(\mc{P},F)|}{m} \leq \fs{e\frac{8wn^2}{K}}{m}^{m} = \fs{16ewn}{dK}^{m}.$$
Combining these estimates we obtain
\allowdisplaybreaks
	\begin{align*} 
 N^{\frac{1}{m}} 
 &\leq K^{2/d} \cdot e^{4} \fs{4e}{\eps^2}^{\frac{2d}{\log n}} \cdot \frac{16ewn}{dK}\\
 &= 16 e^5 \cdot (4e)^\frac{2d}{\log n}\cdot\eps^{-\frac{4d}{\log n}} \cdot K^{2/d-1}\cdot w \cdot \frac{n}{d}\\
  &\leq 8 e^5 \cdot (4e)^\frac{2\cdot \frac{1}{10}\log n}{\log n}\cdot\eps^{-\frac{4d}{\log n}} \left(\frac{2w\log n}{2 d^2 n}\right)^{\frac{d-2}{d}}\cdot w \cdot \frac{n^2}{m}\\
  &\leq 16 e^5 \cdot (4e)^{\frac{1}{5}}\cdot\eps^{-\frac{4d}{\log n}} \cdot \left(\frac{\log n}{ n}\right)^{\frac{d-2}{d}}\cdot d^{-\frac{2d-4}{d}}\cdot w^{\frac{2d-2}{d}} \cdot \frac{n^2}{m}\\
  &\leq 8000\cdot\eps^{-\frac{4d}{\log n}} \cdot \left(\frac{\log n}{ n}\right)^{\frac{d-2}{d}}\cdot d^{-\frac{2d-4}{d}}\cdot \left(\eps d \fs{ n}{\log n}^{\frac{d-2}{2d-2}}\right)^{\frac{2d-2}{d}} \cdot \frac{n^2}{m}\\
  &= 8000\cdot\eps^{2-\left(\frac{2}{d}+\frac{4d}{\log n}\right)} \cdot d^{\frac{2}{d}}\cdot \frac{n^2}{m}\\
  &\leq 24000\cdot\varepsilon \cdot\frac{n^2}{m}=C_0\cdot\varepsilon\cdot\frac{n^2}{m},
	\end{align*}
the last inequality following from our choice of $n_0$.

This concludes the proof of the lemma.
\end{proof}	 

\cref{l:Lower} immediately implies the desired lower bounds on the sparse twin-width of $d$-regular graphs, when combined with the following rather loose estimate of the number of $n$-vertex $d$-regular graphs, which is implied for example by a much more precise and general result of McKay and Wormald~\cite{mckay_asymptotic_1991}.

\begin{thm}[see ~\cite{mckay_asymptotic_1991}]\label{t:Dnumber} 
	There exists $C_1 > 0$ such that for all $n,d$ such that $d \leq \log n$ and  $dn$ is even there are at least $(C_1\frac{n}{d})^{\frac{dn}{2}}$  $d$-regular graphs $G$ with $V(G)=[n]$.
\end{thm}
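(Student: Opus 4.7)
The standard tool for lower-bounding the number of $d$-regular graphs on $[n]$ when $d$ is small relative to $n$ is the Bollob\'as \emph{configuration (or pairing) model}, and that is the route I would follow. Attach $d$ labelled half-edges to each vertex of $[n]$ and consider a uniformly random perfect matching $M$ of the resulting $nd$ half-edges. The total number of such matchings is
$$(nd-1)!! \;=\; \frac{(nd)!}{2^{nd/2}\,(nd/2)!}.$$
Projecting $M$ onto $[n]$ yields a multigraph in which every vertex has degree $d$, and each simple $d$-regular graph arises as the projection of exactly $(d!)^n$ matchings (one independently permutes the $d$ half-edges at each vertex). Writing $p_{n,d}$ for the probability that the projection is a simple graph, this yields the exact identity
$$N(n,d) \;=\; \frac{p_{n,d}\cdot (nd-1)!!}{(d!)^n},$$
so the whole problem reduces to lower-bounding $p_{n,d}$ and then applying Stirling.

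The first substantive ingredient is a lower bound of the form $p_{n,d} \geq e^{-O(d^2)}$. The sharp asymptotic $p_{n,d} = (1+o(1))e^{(1-d^2)/4}$ is classical (Bollob\'as), but for our purposes a much cruder argument suffices: a first/second-moment computation on the numbers of loops and double edges in a random configuration gives $p_{n,d} \geq e^{-d^2/2}$ uniformly for $d = o(\sqrt n)$, which certainly covers the target range $d \leq \log n$. The key point is that the desired bound $(C_1 n/d)^{nd/2}$ absorbs multiplicative factors of $e^{O(d^2)}$ into the base constant $C_1$, since in our regime $d^2 \leq (\log n)^2 = o(nd)$, so the cruder estimate costs us nothing.

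The second ingredient is Stirling's formula applied to $(nd)!$, $(nd/2)!$ and $d!$, which gives
$$\frac{(nd-1)!!}{(d!)^n} \;=\; \Theta\!\left((2\pi d)^{-n/2}\right)\,(n/d)^{nd/2}\,e^{nd/2}.$$
Combining with the bound on $p_{n,d}$ yields
$$\log N(n,d) \;\geq\; \frac{nd}{2}\log\!\frac{n}{d} + \frac{nd}{2} - \frac{n}{2}\log(2\pi d) - O(d^2).$$
To match the target $(C_1 n/d)^{nd/2}$ it therefore suffices to pick $C_1 > 0$ so that $1-\log C_1 \geq \log(2\pi d)/d + O(d/n)$ uniformly over $2 \leq d \leq \log n$. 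Both corrections on the right are uniformly bounded: the first attains its maximum near $d = 2$ and tends to $0$ as $d$ grows, while the second is $o(1)$ throughout the range. Hence a sufficiently small absolute $C_1 > 0$ works.

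The main obstacle will be purely bookkeeping, namely checking that the Stirling corrections and the simplicity-probability correction can simultaneously be absorbed into a single absolute constant $C_1$ valid uniformly over $2 \leq d \leq \log n$, and separately handling the edge case $d = 1$ (with $n$ even, $1$-regular graphs on $[n]$ are perfect matchings, of which there are $(n-1)!! \geq (n/e)^{n/2}$, from which the bound is immediate). Once this bookkeeping is carried out, the theorem follows.
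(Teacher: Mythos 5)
The paper does not actually prove this statement: it is quoted as a known fact, with the proof outsourced to McKay and Wormald's asymptotic enumeration of $d$-regular graphs (valid for $d=o(\sqrt n)$), of which the displayed bound is a crude corollary. Your configuration-model derivation is the standard self-contained route to such a lower bound and is correct in outline: the identity $N(n,d)=p_{n,d}\,(nd-1)!!/(d!)^n$, the Stirling computation $\frac{(nd-1)!!}{(d!)^n}=\Theta\bigl((2\pi d)^{-n/2}\bigr)(n/d)^{nd/2}e^{nd/2}$, the absorption of any $e^{-O(d^2)}$ factor into $C_1^{nd/2}$ because $d^2=o(nd)$ when $d\le\log n$, and the uniform boundedness of $\log(2\pi d)/d$ all check out, so a single absolute $C_1$ works. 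What each approach buys is clear: the citation gets the sharp constant for free, while your argument isolates exactly how little of that precision is needed. The one step you should not wave through is the claim that ``a first/second-moment computation'' gives $p_{n,d}\ge e^{-d^2/2}$. First and second moments of the numbers of loops and double edges bound the probability that the configuration is \emph{not} simple, but the expected number of such defects is $\Theta(d^2)\gg 1$, so no union-bound complement yields a lower bound on $p_{n,d}$; lower-bounding the probability of \emph{no} bad events for growing $d$ genuinely requires Bollob\'as's Poisson approximation (fixed $d$), McKay's switching argument (which covers $d=o(n^{1/3})$ and hence all $d\le\log n$), or a Suen/Lov\'asz-Local-Lemma-type correlation inequality. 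Any of these supplies the needed $e^{-O(d^2)}$, so the gap is one of justification rather than of substance, but as written the key probabilistic input is misattributed to a tool that cannot deliver it.
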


We now derive Theorem~\ref{t:lower} (i), which we restate here as a corollary, from \cref{l:Lower} and \cref{t:Dnumber}.

\begin{cor}[Theorem~\ref{t:lower} (i)]
 There exists $\eps > 0$ such that if $G$ is a uniformly random $n$-vertex, $d$-regular graph with $2 < d < \frac{1}{5}\log n$, then 
 $$\stww(G) \geq \eps\frac{d}{\sqrt{\log n}}n^{\frac{d-2}{2d-2}}$$
 with high probability.
\end{cor}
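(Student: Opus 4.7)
The plan is a straightforward first-moment calculation combining the counting bound in \cref{l:Lower} with the enumerative lower bound in \cref{t:Dnumber}. Let $C_0, C_1$ denote the constants from those two results. I would fix a sufficiently small absolute constant $\delta > 0$ and show that among the (at least) $(C_1 n/d)^{dn/2}$ labelled $d$-regular graphs on $[n]$, only a negligible fraction satisfy $\stww(G) \leq \delta \frac{d}{\sqrt{\log n}} n^{\frac{d-2}{2d-2}}$.

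The first step is a change of normalization: the target threshold can be rewritten as $\eps \, d \, (n/\log n)^{\frac{d-2}{2d-2}}$ with $\eps := \delta (\log n)^{-1/(2d-2)}$, and since $d \geq 3$ we have $\eps \leq \delta$. Taking $\delta \leq 1/3$ therefore ensures $\eps \leq 1/3$, and with $m := dn/2$ (the edge count of any $d$-regular graph on $n$ vertices) the range hypotheses $(1+\eps)n \leq m \leq \frac{1}{10} n \log n$ required by \cref{l:Lower} follow immediately from $d \geq 3$ and $d < \frac{1}{5} \log n$. \cref{l:Lower} then bounds the number of graphs on $[n]$ with exactly $m$ edges and sparse twin-width at most $\delta \frac{d}{\sqrt{\log n}} n^{\frac{d-2}{2d-2}}$ by $(C_0 \eps n^2/m)^m = (2 C_0 \eps n/d)^{dn/2} \leq (2 C_0 \delta n/d)^{dn/2}$.

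Dividing by the $(C_1 n/d)^{dn/2}$ lower bound from \cref{t:Dnumber} (whose parity hypothesis $2 \mid dn$ is automatic since any regular graph has an even number of edge-endpoints), the probability that a uniformly random $d$-regular graph violates the desired inequality is at most $(2 C_0 \delta/C_1)^{dn/2}$. Choosing, say, $\delta := \min\{1/3,\, C_1/(4 C_0)\}$ makes the base at most $1/2$, so the probability is at most $(1/2)^{dn/2} = o(1)$ uniformly in $d \geq 3$. There is no genuine obstacle in this argument; the one feature worth highlighting is that the auxiliary factor $(\log n)^{-1/(2d-2)}$ appearing in the change of normalization is bounded by $1$ uniformly in $d \geq 3$, so it can simply be discarded when absorbing $\eps$ into $\delta$.
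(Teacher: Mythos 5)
Your argument is correct and is essentially the paper's proof: a first-moment comparison of the count from \cref{l:Lower} (with $m=dn/2$, so $C_0\eps n^2/m = 2C_0\eps n/d$) against the enumeration $(C_1 n/d)^{dn/2}$ from \cref{t:Dnumber}, with $\delta$ chosen so the ratio's base is below $1$. The only cosmetic difference is that you feed \cref{l:Lower} an $n$-dependent $\eps=\delta(\log n)^{-1/(2d-2)}$, whereas the lemma is stated for fixed $\eps$ (its $n_0$ depends on $\eps$); this is harmless because, as you note, $\eps\leq\delta$, so one can instead apply the lemma with the fixed value $\delta$ and use that its threshold $\delta d(n/\log n)^{\frac{d-2}{2d-2}}$ dominates $\delta\frac{d}{\sqrt{\log n}}n^{\frac{d-2}{2d-2}}$, which is exactly the paper's route.
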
 	

\begin{proof}
	Let $\eps := \frac{C_1}{3 C_0}$, where $C_0$ and $C_1$ are the constants satisfying the conditions of \cref{l:Lower} and \cref{t:Dnumber}, respectively.  We may suppose that $n\geq n_0$, where $n_0$ is obtained from \cref{l:Lower}, depending on $\varepsilon$. Let $w := 	\eps\frac{d}{\sqrt{\log n}}n^{\frac{d-2}{2d-2} }. $ Then by \cref{l:Lower} there are at most $$\s{C_0\eps \frac{2n}{d}}^{\frac{dn}{2}}$$ $d$-regular graphs on $n$-vertices with $\stww(G) < w$, and by \cref{t:Dnumber} it follows that the probability that $\stww(G) < w$ is at most
	$$ \s{C_0\eps \frac{2n}{d}}^{\frac{dn}{2}} / \s{C_1\frac{n}{d}}^{\frac{dn}{2}} =  \fs{2C_0\eps}{C_1}^{\frac{dn}{2}} = \fs{2}{3}^{\frac{dn}{2}}\leq \left(\frac{2}{3}\right)^n\xrightarrow{n\rightarrow\infty} 0.$$
	Thus $\stww(G)\geq w$ w.h.p., as desired.
\end{proof}	

It remains to prove Theorem~\ref{t:lower} (ii). This requires more preparation, as in this setting we will not be applying \cref{l:Lower} to $G$ itself, but to the subgraph of $G$ attaining the maximum average degree. First, we will need to ensure that this subgraph is not too small.

Let $\mc{G}_{n,m}$ denote the set of all graphs $G$ with $V(G)=[n]$ and $\e(G)=m$.
 We say that a graph $G$ with $n$ vertices and $m$ edges is \emph{$\alpha$-balanced} if $\e(G[X]) \leq \frac{m}{n}|X|$ for every $X \subseteq [n]$ with $|X| \leq \alpha (n-1)$.

\begin{lem}\label{l:balanced}
	If $0<\eps<0.1$, $\alpha = \frac{1}{4}e^{-\frac{4}{\eps}}$ and $n,m\in \N$ are such that $(1+\varepsilon)n\leq m\leq \frac{n^2}{8}$, almost all graphs in $\mc{G}_{n,m}$ are $\alpha$-balanced.
\end{lem}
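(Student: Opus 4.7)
I'll use a first-moment argument. Set $d := 2m/n \geq 2(1+\eps)$. A graph $G \in \mc{G}_{n,m}$ fails to be $\alpha$-balanced iff there exists $X \subseteq [n]$ with $k := |X| \leq \alpha(n-1)$ and $\e(G[X]) > dk/2$. For $k \leq d+1$ this is automatic since $\e(G[X]) \leq \binom{k}{2} \leq dk/2$; hence we may assume $d+1 < \alpha(n-1)$ and restrict attention to $k \in [d+2, \alpha(n-1)]$. Under the uniform measure on $\mc{G}_{n,m}$, for a fixed size-$k$ subset $X_0$ the random variable $\e(G[X_0])$ is hypergeometric with mean $m\binom{k}{2}/\binom{n}{2}$, and combining the standard tail estimate $\bb{P}(\e(G[X_0])\geq s)\leq \binom{\binom{k}{2}}{s}(m/\binom{n}{2})^s$ with $\binom{n}{k} \leq (en/k)^k$ bounds the expected number of bad size-$k$ subsets by
$$B_k \;\leq\; \left(\frac{en}{k}\right)^{\!k}\!\left(\frac{ek}{n}\right)^{\!dk/2} \;=\; \exp\!\bigl(k\bigl[1+d/2-(d/2-1)\log(n/k)\bigr]\bigr).$$

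I would then show $\sum_{k=d+2}^{\alpha(n-1)} B_k = o(1)$ by splitting at $k_0 := \lceil n^{1/2}\rceil$. For the tail $k\in[k_0,\alpha(n-1)]$, using $\log(n/k) \geq \log(1/\alpha) = \log 4 + 4/\eps$ and $d/2-1\geq\eps$, and observing that $1+d/2-(d/2-1)\log(1/\alpha)$ is decreasing in $d$ (its $d$-derivative equals $(1-\log(1/\alpha))/2 < 0$, since $\alpha < 1/e$), the expression is maximized at $d=2+2\eps$ where it equals $-2+\eps(1-\log 4)<-1$; hence $B_k \leq e^{-k}$ and the tail sums to $e^{-\Omega(\sqrt n)}=o(1)$. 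For the head $k\in[d+2,k_0]$ (non-empty only if $d < k_0$), use the sharper $\log(n/k) \geq (\log n)/2$: when $d \leq C\sqrt{\log n}$ the $\log n$-term dominates and each $B_k \leq n^{-\Omega(\eps k)}$, summing to $n^{-\Omega(\eps)}=o(1)$; when $d > C\sqrt{\log n}$, the base term $B_{d+2}$ is already super-polynomially small, as one checks $\log B_{d+2} = (d+2)(1+d/2) - \tfrac{(d-2)(d+2)}{2}\log(n/(d+2)) \leq -\Omega(d^2\log n)$ using $\log(n/(d+2)) \geq (\log n)/2$, and the geometric decay $B_{k+1}/B_k \leq e^{-1}$ (valid throughout by the same computation that handled the tail) yields $\sum_k B_k = O(B_{d+2}) = o(1)$. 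Markov's inequality then finishes the proof.

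\textbf{Main obstacle.} The delicate point is making the estimates go through \emph{uniformly} over the entire admissible range $d \in [2+2\eps, \alpha(n-1)-1]$. The choice $\alpha = \tfrac{1}{4} e^{-4/\eps}$ is calibrated so that $\log(1/\alpha) = \log 4 + 4/\eps$ exceeds with margin the thresholds needed both for the per-term bound $B_k \leq e^{-k}$ and for the geometric ratio $B_{k+1}/B_k \leq e^{-1}$, uniformly over such $d$; controlling the behaviour of the base term $B_{d+2}$ in both the bounded-$d$ regime (where the $\log n$ factor saves us) and the large-$d$ regime (where $d^2 \log n$ does) is the technical heart of the argument.
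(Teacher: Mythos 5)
Your proposal is correct and follows essentially the same route as the paper: a first-moment/union-bound count over all subsets $X$ with $|X|\leq\alpha(n-1)$, using the standard estimates $\binom{n}{k}\leq(en/k)^k$ and $\mathbb{P}(s\text{ specified pairs are all edges})\leq(m/\binom{n}{2})^s$ to get a per-size bound of the form $(e^{O(1)}k/n)^{(d/2-1)k}$, with $\alpha=\frac14e^{-4/\eps}$ calibrated exactly as in the paper to make each term at most $e^{-k}$. The paper avoids your head/tail and small-$d$/large-$d$ case splits by a single ratio test $f(x+1)/f(x)\leq\frac12$ together with the observation that the base term $f(2)$ already tends to $0$, but this is only a difference in bookkeeping, not in the underlying argument.
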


\begin{proof} 
    Given the upper bound on $m$, we may assume that $n$ is large enough such that $\binom{n}{2}-m\geq \frac{\binom{n}{2}}{e}$ (and $n\geq e^2+1$).

	We start with an auxiliary computation. Let $d := \frac{m}{n}\geq 1+\varepsilon$ and
	let $$f(x) := e^{(2d+1)x} \fs{x-1}{n-1}^{(d-1)x}. $$
	Note that for $2 \leq x \leq \alpha(n-1)$, we have 
 \begin{align*}
     \frac{f(x+1)}{f(x)} 
     &= e^{2d+1}\fs{x}{x-1}^{(d-1)x}\fs{x}{n-1}^{d-1} 
     \leq e^{2d+1}4^{d-1}\alpha^{d-1}=e^3 (4e^2\alpha)^{d-1}\\
     &=e^3(e^{-\frac{4}{\varepsilon}+2})^{d-1}
     \leq e^3(e^{-\frac{4}{\varepsilon}+2})^{\varepsilon}
     =e^{2\varepsilon-1}\leq \frac{1}{2},
 \end{align*}
where for the first inequality sign, we used that $(\frac{x}{x-1})^x$ is monotonically decreasing for $x>1$ and that $x\ge 2$. 
	It follows that $f(x) \leq f(2)2^{2-x}$ for all integers $2 \leq x \leq \alpha(n-1)$. 
	
	Moving on to the main proof, for $X \subseteq [n]$ let $\mc{B}(X)$ denote the set of all graphs  $G \in \mc{G}_{n,m}$ such that $\e(G[X]) > d|X|$. Let $x := |X|$ and $k := \lceil dx \rceil$. Then 
 \begin{equation}
 |\mc{B}(X)| \leq \binom{\binom{x}{2}}{k} \binom{\binom{n}{2}}{m-k},
 \end{equation}
	as there are at most $\binom{\binom{x}{2}}{k}$ ways of choosing $k$ edges of $G[X]$ and at most $\binom{\binom{n}{2}}{m-k}$ choices for the remaining edges.
	Thus
 \begin{align*} 
 \frac{|\mc{B}(X)|}{|\mc{G}_{n,m}|} 
 &\leq  \frac{\binom{\binom{x}{2}}{k} \binom{\binom{n}{2}}{m-k}}{\binom{\binom{n}{2}}{m}}
=  \binom{\binom{x}{2}}{k}\frac{ \frac{\binom{n}{2}!}{(m-k)!\left(\binom{n}{2}-(m-k)\right)!}}{\frac{\binom{n}{2}!}{m!\left(\binom{n}{2}-m\right)!}}\\
&=  \binom{\binom{x}{2}}{k}
\frac{m(m-1)\dots(m-k+1)}{\left(\binom{n}{2}-m+k\right)\left(\binom{n}{2}-m+k-1\right)\dots\left(\binom{n}{2}-m+1\right)}\\
&\leq \left(\frac{e\binom{x}{2}}{k}\right)^k
\frac{m^k}{\left(\binom{n}{2}-m\right)^k}
\leq \left(\frac{e\binom{x}{2}}{dx}
\cdot \frac{dn}{\binom{n}{2}/e}\right)^k\\
&= \left(\frac{e^2 (x-1)}{n-1}\right)^k
\leq \left(\frac{e^2 (x-1)}{n-1}\right)^{dx}.
	\end{align*} 

We upper bound the proportion of graphs in $\mc{G}_{n,m}$ which are not  $\alpha$-balanced by 
	\begin{align*} 
 \sum_{X \subseteq [n], 0 < |X| < \alpha n}
 &\frac{|\mc{B}(X)|}{|\mc{G}_{n,dn}|} 
 \leq \sum_{1 \leq  x < \alpha n} \binom{n}{x}\fs{e^2(x-1)}{(n-1)}^{dx} 
 \leq \sum_{2 \leq  x < \alpha n}\fs{en}{x}^{x}\fs{e^2(x-1)}{(n-1)}^{dx} \\ 
 &\leq  \sum_{2 \leq  x < \alpha n} e^{(2d+1)x} \fs{x-1}{n-1}^{(d-1)x}  
 = \sum_{2 \leq  x < \alpha n} f(x) 
 \leq f(2) \sum_{2 \leq  x < \alpha n} 2^{2-x} \\ 
 &\leq 2 f(2) = \frac{2e^{2(2d+1)}}{(n-1)^{2(d-1)}}=2e^6\left(\frac{e^2}{(n-1)}\right)^{2(d-1)}
 \leq 2e^6\left(\frac{e^2}{(n-1)}\right)^{2\varepsilon}.	
	\end{align*} 
	As the last term tends to $0$ as $n \to \infty$ the lemma follows.
\end{proof}

The next lemma states that there is no large part of $G$ which is too dense, which will allow us to apply \cref{l:Lower}. We will need the following Chernoff bound. If $X$ is a sum of independent random variables taking values in $\{0,1\}$, then \begin{equation}\label{eq:chernoff}
    \brm P\left[X\geq (1+\varepsilon)\brm E[X]\right]\leq e^{-\frac{\varepsilon^2}{2+\varepsilon}\brm E[X]}
\end{equation}
for every $\varepsilon>0$.

\begin{lem}\label{lem:xlogx}
    Let $\alpha>0$ and let $n,m\in \N$ be such that $n \leq  m \leq \frac{1}{11}n\log n$. If $G$ is a uniformly random graph in $\mathcal G_{n,m}$, then with high probability for every set $X\subseteq [n]$ with $|X|\geq \alpha n+1$ we have
    \begin{equation}\label{eq:xlogx}
        \e(G[X])\leq \frac{1}{10}|X|\log|X|.
    \end{equation}
\end{lem}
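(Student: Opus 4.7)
The plan is a union bound combined with a Chernoff-type tail estimate. Fix any $X \subseteq [n]$ with $|X|=x\geq \alpha n+1$. Since $G$ is sampled uniformly from $\mathcal{G}_{n,m}$, the random variable $\e(G[X])$ is hypergeometric with mean $\mu_x := m\binom{x}{2}/\binom{n}{2}$, and by Hoeffding's classical theorem it satisfies the same upper-tail Chernoff bound \eqref{eq:chernoff} as its binomial counterpart. With $t := \tfrac{1}{10} x \log x$ and the substitution $\varepsilon = t/\mu_x - 1$, the bound rewrites cleanly as $\brm{P}[\e(G[X])\geq t] \leq \exp(-(t-\mu_x)^2/(t+\mu_x))$, reducing the task to lower bounding this exponent uniformly in $x$.

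The key step is to establish the uniform inequality $t/\mu_x \geq 11/10$ for all $2 \leq x \leq n$. Using $m \leq \tfrac{1}{11}n\log n$, direct computation yields
\[
\frac{t}{\mu_x}\;\geq\;\frac{11(n-1)\log x}{10(x-1)\log n},
\]
so it suffices to show $(n-1)\log x \geq (x-1)\log n$ for $2 \leq x \leq n$. This follows at once from the elementary fact that $y \mapsto (\log y)/(y-1)$ is strictly decreasing on $[2,\infty)$, which I would verify by checking the sign of its derivative. Once $t/\mu_x \geq 11/10$ is in hand, it yields $t - \mu_x \geq t/11$ and $t + \mu_x \leq 2t$, which plugged into the Chernoff inequality gives the uniform bound $\brm{P}[\e(G[X]) \geq t] \leq e^{-t/242} = e^{-x\log x/2420}$.

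To finish, take a union bound over all $X$ of size $x \in [\alpha n + 1, n]$. Using $\binom{n}{x} \leq (en/x)^x \leq (e/\alpha)^x$ for $x \geq \alpha n$, the probability that \eqref{eq:xlogx} fails is bounded above by
\[
\sum_{x=\lceil\alpha n+1\rceil}^{n} \exp\!\Big(x\log(e/\alpha) - \tfrac{x\log x}{2420}\Big).
\]
For $n$ sufficiently large depending on $\alpha$, we have $(\log x)/2420 \geq 2\log(e/\alpha)$ uniformly for $x \geq \alpha n$, so each summand is at most $e^{-x}$, and the total is $O(e^{-\alpha n}) = o(1)$, as required.

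The main obstacle—really a calibration check rather than a conceptual difficulty—is securing the uniform lower bound $t/\mu_x \geq 11/10$ across the whole range $x \in [\alpha n + 1, n]$. The specific constants $\tfrac{1}{10}$ and $\tfrac{1}{11}$ in the statement appear to be calibrated so that the elementary monotonicity of $(\log y)/(y-1)$ suffices to close the gap between $t$ and $\mu_x$; the remainder is a standard Chernoff-plus-union-bound calculation.
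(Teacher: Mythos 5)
Your proof is correct, but it takes a genuinely different route from the paper's. The paper first establishes the bound for the binomial model $G(n,p)$ with $p=m/\binom{n}{2}$ (where $\e(G[X])$ is a sum of independent indicators, so the Chernoff bound \eqref{eq:chernoff} applies directly), splitting the calibration into two sub-regimes $n\leq m\leq n\sqrt{\log n}$ and $n\sqrt{\log n}\leq m\leq \frac{1}{11}n\log n$, and then transfers the conclusion to $\mathcal G_{n,m}$ by a coupling: it conditions $G(n,p)$ on having at least $m$ edges (losing only a factor $2$ in probability, since the mean of a binomial with integer mean is also its median) and takes a uniformly random $m$-edge subgraph, which is uniform in $\mathcal G_{n,m}$ and has no more edges inside any $X$. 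You instead work in $\mathcal G_{n,m}$ from the start, observing that $\e(G[X])$ is hypergeometric and invoking Hoeffding's theorem that sampling without replacement obeys the same upper-tail Chernoff bounds as the binomial with the same mean; this eliminates the coupling step entirely at the cost of one (standard, correct) citation. Your calibration is also cleaner: the single uniform bound $t/\mu_x\geq 11/10$, reduced via $m\leq\frac{1}{11}n\log n$ to the monotone decrease of $(\log y)/(y-1)$ (whose derivative has numerator $1-\frac{1}{y}-\log y<0$ for $y>1$), replaces the paper's two-case analysis, and your exponent $e^{-x\log x/2420}$ comfortably beats the union bound over $\binom{n}{x}\leq(e/\alpha)^x$ sets. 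The only hypothesis you never use is $m\geq n$, but that is harmless since the upper-tail event only becomes rarer as $m$ decreases while $\mu_x\leq\frac{10}{11}t$ is maintained.
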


\begin{proof}
    We may first suppose that $n$ is large enough such that $\alpha n\geq 3$.

    Say a graph $G$ with vertex set $[n]$ has property (\textasteriskcentered) if \eqref{eq:xlogx} holds for every $X\subseteq [n]$ with $|X|\geq \alpha n+1$.
    
    We first claim that if $G$ is a random graph $G(n,p)$, with $p=\frac{m}{\binom{n}{2}}$, then (\textasteriskcentered) holds.

    Let $X\subseteq [n]$ such that $|X|\geq \max(\alpha n+1,3)$ be fixed and $x=|X|$. Then, $\e(G[X])$ is a sum of independent binomial variables (one for each possible edge). We have  $\brm E[\e(G[X])]=p\cdot\binom{x}{2}=\frac{m\cdot\binom{x}{2}}{\binom{n}{2}}= \frac{mx(x-1)}{n(n-1)}$.

    Note that the function $\frac{\log x}{x}$ is monotone decreasing for $x\geq e$. In particular, as $x\leq n$, $\frac{\log x}{x}\geq \frac{\log n}{n}$. Suppose that $n f_1(n)\leq m\leq n f_2(n)$; we will have to consider two different regimes. However we will always have $f_2(n)\leq \frac{1}{11}\log n$, and so $\frac{\log n}{10f_2(n)}>1$. We then have that
    \begin{align*}
        \frac{1}{10}x\log x
        &= \frac{1}{10}\cdot \frac{\log x}{x}x^2
        \geq\frac{1}{10}\cdot \frac{\log n}{n}x^2\cdot \frac{m}{n f_2(n)}
        =\frac{\log n}{10 f_2(n)}\cdot \frac{mx^2}{n^2}\\
        &\geq\frac{\log n}{10 f_2(n)}\cdot \frac{mx(x-1)}{n(n-1)}
        =\left(1+\left(\frac{\log n}{10 f_2(n)}-1\right)\right)\brm E[\e(G[X])].
    \end{align*}
    
    Hence, by the Chernoff bound above (\cref{eq:chernoff}),
    \begin{align*}
        \brm P\left[\e(G[X])\geq \frac{1}{10}x\log x\right]
        &\leq \brm P\left[\e(G[X])\geq \left(1+\left(\frac{\log n}{10 f_2(n)}-1\right)\right)\brm E[\e(G[X])]\right]\\
        &\leq \exp\left(-\frac{\left(\frac{\log n}{10 f_2(n)}-1\right)^2}{2+\left(\frac{\log n}{10 f_2(n)}-1\right)}\cdot\frac{mx(x-1)}{n(n-1)}\right)\\
        &\leq \exp\left(-\frac{\left(\frac{\log n}{10 f_2(n)}-1\right)^2}{1+\frac{\log n}{10 f_2(n)}}\cdot\frac{n f_1(n)\cdot (\alpha n)^2}{n^2}\right)\\
        &< \exp\left(-\alpha^2\frac{\left(\frac{\log n}{10 f_2(n)}-1\right)^2}{\frac{\log n}{5 f_2(n)}}f_1(n) \cdot n \right).
    \end{align*}

If $n\leq m\leq n\sqrt{\log n}$, i.e. $f_1(n)=1$ and $f_2(n)=\sqrt {\log n}$, then this value becomes
$$\exp\left(-\alpha^2\frac{\left(\frac{\log n}{10 \sqrt{\log n}}-1\right)^2}{\frac{\log n}{10 \sqrt{\log n}}}n\right)
=\exp\left(-\frac{\alpha^2}{20}\left(\sqrt{\log n}-20+\frac{100}{\sqrt{\log n}}\right)n\right)\ll 2^{-n}.$$

Otherwise, if $n\sqrt{\log n}\leq m\leq \frac{1}{11}n\log n$, i.e. $f_1(n)=\sqrt{\log n}$ and $f_2(n)=\frac{1}{11}\log n$, then this value becomes
$$\exp\left(-\alpha^2\frac{\left(\frac{\log n}{10 \cdot \frac{1}{11}\log n}-1\right)^2}{\frac{\log n}{5 \cdot \frac{1}{11}\log n}}\sqrt{\log n}\cdot n \right)
=\exp\left(-\frac{\alpha^2}{220}\sqrt{\log n} \cdot n\right)\ll 2^{-n}.$$

Given that are at most $2^n$ possible choices of $X$, in all cases the probability that there is some $X$ with $|X|\geq \alpha n+1$ for which \eqref{eq:xlogx} does not hold converges to $0$ as $n\rightarrow\infty$. This completes the proof of the claim.

We now show the statement when $G$ is uniformly chosen in $\mathcal G_{n,m}$ with a coupling argument.

Let $G(n,p,m)$ denote the random graph distribution defined by conditioning $G(n,p)$ on having at least $m$ edges. The number of edges of $G(n,p)$ follows a binomial distribution with parameters $B(\binom{n}{2},p)$. It is well-known (see \cite{lord_binomial_2010}, for instance) that when the mean of a binomial distribution is an integer, it is also equal to the median. In particular, for our previous choice $p=\frac{m}{\binom{n}{2}}$,
$$\brm P(\e(G(n,p)\geq m)\geq \frac{1}{2}.$$ It thus also holds that if $G$ is a random graph $G(n,p,m)$, with high probability (\textasteriskcentered) holds.

Let $G$ and $G'$ be random graphs obtained using the following process. $G'$ will have distribution $G(n,p,m)$, and $G$ will be chosen as a uniformly random subgraph of $G'$ with exactly $m$ edges. It is immediate by symmetry that $G$ is a uniformly random graph in $\mathcal G_{n,m}$. Furthermore, $E(G)\subseteq E(G')$ and so $\e(G[X])\leq \e(G'[X])$ for any $X\subseteq [n]$. As (\textasteriskcentered) holds with high probability for $G'$, it thus follows that with high probability (\textasteriskcentered) holds for $G$. This concludes the proof of the statement.
\end{proof}

We are now ready to start the proof of \cref{t:lower} (ii), which we restate here.

\begin{thm}[\cref{t:lower} (ii)]\label{t:lower2} 
	For every $\eps > 0$ there exists $\delta > 0$ such that for all $n,m\in \N$ such that $(1+\eps)n \leq  m \leq \frac{1}{11}n\log n$ the following holds. If $G$ is a uniformly random graph in $\mathcal G_{n,m}$, then with high probability  we have 	
	\begin{equation}\label{e:lower2}	\tww(G) \geq 
	\delta \frac{d}{\sqrt{\log n}}n^{\frac{d-2}{2d-2} }, 		
	\end{equation}
	where $d = \mad(G)$.
\end{thm}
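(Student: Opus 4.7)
The plan is to combine \cref{l:Lower} with the fact that sparse twin-width is monotone under induced subgraphs: $\stww(G[X]) \leq \stww(G)$ for every $X \subseteq V(G)$, since restricting any $w$-contraction sequence for $G$ to $X$ can only decrease vertex degrees. As observed at the end of the introduction, it suffices to prove the bound with $\tww(G)$ replaced by $\stww(G)$, so the goal becomes to lower bound $\stww(H)$, where $H := G[X^*]$ is a subgraph of $G$ attaining $\mad(G) = d$.

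Set $\alpha := \frac{1}{4}e^{-4/\eps}$. By \cref{l:balanced} and \cref{lem:xlogx}, with high probability $G$ is $\alpha$-balanced and satisfies $\e(G[Y]) \leq \frac{1}{10}|Y|\log|Y|$ for every $Y \subseteq [n]$ with $|Y| \geq \alpha n + 1$. On this event, the density-maximizing subgraph $H$ either satisfies $|X^*| > \alpha(n-1)$ or equals $G$, so $n' := |X^*| \geq \alpha n$ for $n$ large. Furthermore $d = 2\e(H)/n' \leq \frac{1}{5}\log n'$ while $d \geq 2m/n \geq 2(1+\eps)$, placing $m' := \e(H) = dn'/2$ in the range $(1+\eps)n' \leq m' \leq \frac{1}{10}n' \log n'$ required by \cref{l:Lower}.

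Fix $\delta > 0$ to be chosen later, and assume $\stww(G) \leq \delta d n^{(d-2)/(2d-2)}/\sqrt{\log n}$, so that $\stww(H)$ satisfies the same bound. A direct computation using $\frac{d-2}{2d-2} \in [0, \tfrac12)$ and $n' \geq \alpha n$ yields
\[
\fs{n'}{\log n'}^{\frac{d-2}{2d-2}} \geq \sqrt{\alpha} \cdot \frac{n^{\frac{d-2}{2d-2}}}{\sqrt{\log n}},
\]
so with $\eps' := \delta/\sqrt{\alpha}$ we obtain $\stww(H) \leq \eps' d \fs{n'}{\log n'}^{\frac{d-2}{2d-2}}$. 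Choosing $\delta$ small enough that $\eps' \leq \min(\eps, 1/3)$, \cref{l:Lower} applied to $H$ (viewed as a graph on $[n']$) bounds the number of such $H$ on any fixed vertex set of size $n'$ by $\s{C_0 \eps' n'^2/m'}^{m'}$.

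To count the bad graphs $G \in \mc{G}_{n,m}$, sum over the choice of $X \subseteq [n]$ of size $n'$, the bad induced subgraph $H$ on $X$, and the $m-m'$ extension edges outside $X$. Dividing by $|\mc{G}_{n,m}| = \binom{\binom{n}{2}}{m}$, the extension factors cancel by Vandermonde, and $\binom{\binom{n'}{2}}{m'} \geq \s{n'^2/(4m')}^{m'}$ yields
\[
\brm{P}[\text{bad}] \leq \sum_{n', m'} \binom{n}{n'} (4C_0\eps')^{m'} \leq n^3 \cdot 2^n \cdot (4C_0\eps')^{(1+\eps)\alpha n},
\]
using $m' \geq (1+\eps)n' \geq (1+\eps)\alpha n$. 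Taking $\delta$ small enough that $4C_0 \eps' < 2^{-1/((1+\eps)\alpha)}$ makes this tend to zero. The delicate point is the calibration of $\delta$: the gain $(4C_0\eps')^{m'}$ is only singly-exponentially small in $n$ (since $m' = \Theta(n)$), barely dominating the $2^n$ loss from enumerating $X$, which forces $\delta$ to depend doubly-exponentially on $1/\eps$.
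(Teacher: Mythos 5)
Your proposal is correct and follows essentially the same route as the paper: reduce to the densest induced subgraph via \cref{l:balanced} and \cref{lem:xlogx}, apply \cref{l:Lower} to that subgraph with a rescaled parameter $\eps'$, and beat the $2^n$ entropy of choosing $X$ by the $(O(\eps'))^{m'}$ gain with $m'=\Theta(n)$. The only (cosmetic) difference is that you cancel the extension-edge count against $\binom{\binom{n}{2}}{m}$ via Vandermonde, whereas the paper bounds the ratio of binomial coefficients directly by $\left(m/\left(\binom{n}{2}-m\right)\right)^{m'}$; both give the same conclusion.
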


\begin{proof}
W.l.o.g.~suppose $\varepsilon<0.1$, let $\alpha = \frac{1}{4}e^{-\frac{4}{\eps}}$ and let $C_0$ be as in \cref{l:Lower}. We show that $\delta = \frac{\alpha^{3/2} 4^{-2/\alpha}}{12C_0}$ satisfies the theorem.  We may suppose that $n$ is sufficiently large such that $\alpha(n-1) \geq \frac{\alpha}{2}n$, $\binom{n}{2}-m\geq n^2/3$, which is possible given the upper bound on $m$, and $\alpha(n-1)\geq n_0$, where $n_0$ is obtained from \cref{l:Lower}.

Let $S(X,m')$ be the set of all graphs in $\mathcal G_{n,m}$ such that $\e(G[X])=m'$ and $X$ is a maximal subset of $[n]$ satisfying $\mad(G)=\frac{2\e(G[X])}{|X|}$, i.e. such that $\mad(G)$ is the average degree of $G[X]$.  Note that $\mad(G)\geq \frac{2e(G)}{\v(G)}=\frac{2m}{n}$.

By \cref{l:balanced}, with high probability every $X\subseteq [n]$ with $|X|\leq \alpha (n-1)$ is such that $\e(G[X])\leq \frac{m}{n}|X|$. In particular, for such an $X$, $\frac{2\e(G[X])}{|X|}\leq \frac{2m}{n}\leq \mad(G)$. Hence, no such $X$ has average degree greater than that of the entire graph $G$. Hence, with high probability $G$ has a set $X\subseteq [n]$ which attains the maximum average degree and has size $|X|\geq \alpha(n-1)$.

On the other hand, \cref{lem:xlogx} states that with high probability every set $X$ with $|X|\geq \alpha(n-1)$ is such that $\e(G[X])\leq \frac{1}{10}|X|\log|X|$.

Hence, with high probability $G$ belongs to some set $S(X,m')$ where $|X| \geq \alpha(n-1)$ and $m'\leq \frac{1}{10}|X|\log|X|$. Furthermore, as $X$ attains the maximum average degree, then $\frac{2m'}{|X|}\geq \frac{2m}{n}$ and so $m'\geq \frac{m}{n}|X|\geq (1+\varepsilon)|X|$.

Let $w := \delta \frac{d}{\sqrt{\log n}}n^{\frac{d-2}{2d-2} } $. Let $N(X,m')$ be the  number of graphs  $G \in S(X,m')$ with 	$\tww(G) \leq w$.
As there are at most $2^n$ choices of $X$ and $o(2^n)$ choices of $m'$, to prove the theorem it suffices to show that    \begin{equation}\label{e:nxm} 4^n N(X,m') \leq |\mathcal G_{n,m}| \end{equation} for any $X$ and $m'$ as above.

Let $x:=|X|$ and $\eps':=\frac{2\delta}{\alpha^{1/2}}$. Note that $$\varepsilon'=\frac{\alpha\cdot 4^{-2/\alpha}}{12C_0}<\alpha=\frac{1}{4}e^{-\frac{4}{\varepsilon}}<\varepsilon.$$

As $n\geq x \geq \alpha(n-1) \geq \frac{\alpha}{2}n$, we have $$\eps' d\left(\frac{x}{\log x}\right)^{\frac{d-2}{2d-2}} \geq \eps' \sqrt{\frac{\alpha}{2}} \cdot\frac{d}{\sqrt{\log n}} n^{\frac{d-2}{2d-2}} \geq w.$$ Hence, as $\stww(G) \leq w$ we have $\stww(G[X]) \leq w \leq  \eps' \left(\frac{x}{\log x}\right)^{\frac{d-2}{2d-2}}$. Given that $x\geq \alpha (n-1)\geq n_0$, $(1+\varepsilon')x\leq (1+\varepsilon)x\leq m'\leq \frac{1}{10}x\log x$, by \cref{l:Lower} there are at most $\s{C_0 \eps' \frac{x^2}{m'}}^{m'}$ possible choices of for the subgraph $G[X]$ such that $G \in S(X,m')$ and $\tww(G) \leq w$. There are also at most $\binom{\binom{n}{2}-\binom{x}{2}}{m-m'}\leq \binom{\binom{n}{2}}{m-m'}$ choices for the remaining edges of $G$.
It follows that $$ N(X,m') \leq \s{C_0 \eps' \frac{x^2}{m'}}^{m'}\binom{\binom{n}{2}-\binom{x}{2}}{m-m'}.$$ Using $m \leq \frac{n}{x}m' \leq \frac{2}{\alpha}m'$, we obtain 
\begin{align*} 
 &\left(\frac{4^nN(X,m')}{|G_{n,m}|}\right)^{1/m'} 
 \leq 4^{n/m'} C_0 \cdot\eps' \cdot\frac{x^2}{m'} \s{\frac{\binom{\binom{n}{2}}{m-m'}}{\binom{\binom{n}{2}}{m}}}^{1/m'}	\\ 
 & \leq 4^{m/m'} C_0 \cdot \frac{2\delta}{\alpha^{1/2}} \cdot \frac{x^2}{m'} \left(\frac{\frac{\binom{n}{2}!}{(m-m')!\left(\binom{n}{2}-(m-m')\right)!}}{\frac{\binom{n}{2}!}{m!\left(\binom{n}{2}-m\right)!}}\right)^{1/m'}  \\ 
 & \leq 4^{m/m'} C_0 \cdot \frac{2\delta}{\alpha^{1/2}} \cdot \frac{x^2}{m'} \left(\frac{m(m-1)\dots(m-m'+1)}{\left(\binom{n}{2}-m+m'\right)\left(\binom{n}{2}-m+m'-1\right)\dots \left(\binom{n}{2}-m+1\right)}\right)^{1/m'}  \\ 
  &\leq 4^{2/\alpha} C_0 \cdot \frac{2\delta}{\alpha^{1/2}} \cdot \frac{x^2}{m'}\cdot  \frac{m}{\binom{n}{2}-m} 
  \leq  4^{2/\alpha} C_0 \cdot \frac{2\delta}{\alpha^{1/2}} \cdot \frac{m}{m'} \cdot \frac{x^2}{n^2/3} \\ 
  &\leq \delta \cdot \frac{12}{\alpha^{3/2}} \cdot 4^{2/\alpha} \cdot C_0 \leq 1,
\end{align*} 
implying \eqref{e:nxm} as desired. This concludes the proof of the theorem.	
\end{proof}	

\section{Concluding remarks}

We have established bounds on the twin-width of sparse graphs in terms of their maximum average degree (see e.g. \cref{l:mainSparse2})  and have shown that these bounds are essentially tight for random graphs and random regular graphs. In combination with the previously known results,  our bounds, in particular, asymptotically determine $\log \tww(G(n,p))$ for $\frac{1 + \eps(n)}{n} \leq  p \leq 1 - \frac{1 + \eps(n)}{n}$ for some function $\eps(n)$ sufficiently slowly approaching  zero. 

The regime in which $\tww(G(n,p))$ remains least understood is the critical window $p=\frac{1+o(1)}{n}$. A related open question is to find optimal bounds on the twin-width of $n$-vertex graphs with maximum average degree $2 + \eps(n)$, where $\eps(n)$  quickly approaches zero. Such graphs structurally resemble long subdivisions, and the twin-width of subdivisions has been investigated before in ~\cite{bonnet_twin-width_2022-1} and~\cite{ahn_twin-width_2023}. It is tempting to extend our result to this setting, but substantial technical difficulties, in particular, involving the treatment of large degree vertices, remain.  

\bibliographystyle{alphaurl}
\bibliography{refs}

\newcommand{\etalchar}[1]{$^{#1}$}
\begin{thebibliography}{BKTW22}

\bibitem[ACH{\etalchar{+}}22]{ahn_twin-width_2022}
Jungho Ahn, Debsoumya Chakraborti, Kevin Hendrey, Donggyu Kim, and Sang-il Oum.
\newblock Twin-width of random graphs, December 2022.
\newblock URL: \url{http://arxiv.org/abs/2212.07880}.

\bibitem[ACHO23]{ahn_twin-width_2023}
Jungho Ahn, Debsoumya Chakraborti, Kevin Hendrey, and Sang-il Oum.
\newblock Twin-width of subdivisions of multigraphs, June 2023.
\newblock URL: \url{http://arxiv.org/abs/2306.05334}.

\bibitem[AHKO22]{ahn_bounds_2022}
Jungho Ahn, Kevin Hendrey, Donggyu Kim, and Sang-il Oum.
\newblock Bounds for the {Twin}-{Width} of {Graphs}.
\newblock {\em SIAM Journal on Discrete Mathematics}, 36(3):2352--2366,
  September 2022.
\newblock \href {https://doi.org/10.1137/21M1452834}
  {\path{doi:10.1137/21M1452834}}.

\bibitem[AS16]{anantharam_densest_2016}
Venkat Anantharam and Justin Salez.
\newblock The densest subgraph problem in sparse random graphs.
\newblock {\em The Annals of Applied Probability}, 26(1), February 2016.
\newblock \href {https://doi.org/10.1214/14-AAP1091}
  {\path{doi:10.1214/14-AAP1091}}.

\bibitem[BGK{\etalchar{+}}22]{bonnet_twin-width_2022-1}
Édouard Bonnet, Colin Geniet, Eun~Jung Kim, Stéphan Thomassé, and Rémi
  Watrigant.
\newblock Twin-width {II}: small classes.
\newblock {\em Combinatorial Theory}, 2(2), June 2022.
\newblock \href {https://doi.org/10.5070/C62257876}
  {\path{doi:10.5070/C62257876}}.

\bibitem[BKTW22]{bonnet_twin-width_2022}
Édouard Bonnet, Eun~Jung Kim, Stéphan Thomassé, and Rémi Watrigant.
\newblock Twin-width {I}: {Tractable} {FO} {Model} {Checking}.
\newblock {\em Journal of the ACM}, 69(1):1--46, February 2022.
\newblock \href {https://doi.org/10.1145/3486655} {\path{doi:10.1145/3486655}}.

\bibitem[Haj90]{hajek_performance_1990}
B.~Hajek.
\newblock Performance of global load balancing by local adjustment.
\newblock {\em IEEE Transactions on Information Theory}, 36(6):1398--1414,
  November 1990.
\newblock \href {https://doi.org/10.1109/18.59935}
  {\path{doi:10.1109/18.59935}}.

\bibitem[HS70]{hajnal_proof_1970}
András Hajnal and E.~Szemerédi.
\newblock Proof of a conjecture of {P}. {Erdős}.
\newblock {\em Colloq Math Soc János Bolyai}, 4, January 1970.
\newblock URL:
  \url{https://www.researchgate.net/publication/265368257_Proof_of_a_conjecture_of_P_Erdos}.

\bibitem[Lor10]{lord_binomial_2010}
Nick Lord.
\newblock Binomial averages when the mean is an integer.
\newblock {\em The Mathematical Gazette}, 94(530):331--332, July 2010.
\newblock \href {https://doi.org/10.1017/S0025557200006690}
  {\path{doi:10.1017/S0025557200006690}}.

\bibitem[MW91]{mckay_asymptotic_1991}
Brendan~D. McKay and Nicholas~C. Wormald.
\newblock Asymptotic enumeration by degree sequence of graphs with degreeso(n
  1/2).
\newblock {\em Combinatorica}, 11(4):369--382, December 1991.
\newblock \href {https://doi.org/10.1007/BF01275671}
  {\path{doi:10.1007/BF01275671}}.

\bibitem[Syl]{sylvester_open_nodate}
John Sylvester.
\newblock Open {Problems} - 1st {Twin}-{Width} {Workshop} 2023, {Problem} 9.
\newblock Private communication.

\end{thebibliography}

\end{document}